\newcommand{\Real}{\mathbb R}
\newcommand{\norm}[1]{\|#1\|}
\newcommand{\abs}[1]{\left\vert#1\right\vert}
\newcommand{\set}[1]{\left\{#1\right\}}
\newcommand{\grad}{\nabla}
\newcommand{\conv}[2]{#1 \ast #2}
\newcommand{\K}{\mathcal{K}}
\newcommand{\wkspace}[1]{L^{#1,\infty}}
\newtheorem{theorem}{Theorem}
\theoremstyle{remark}
\newtheorem{remark}{Remark}
\theoremstyle{lemma}
\theoremstyle{definition}
\newtheorem{definition}{Definition}
\theoremstyle{lemma}
\newtheorem{lemma}{Lemma}
\begin{document}

\title{Intermediate Asymptotics for Critical and Supercritical Aggregation Equations and Patlak-Keller-Segel models} 

\author{Jacob Bedrossian \footnote{\textit{jacob.bedrossian@math.ucla.edu}, University of California-Los Angeles, Department of Mathematics}}

\date{}

\maketitle

\begin{abstract} 
We examine the long-term asymptotic behavior of dissipating solutions to aggregation equations and Patlak-Keller-Segel models with degenerate power-law and linear diffusion. 
The purpose of this work is to identify when solutions decay to the self-similar spreading solutions of the homogeneous
diffusion equations. 
Combined with strong decay estimates, entropy-entropy dissipation methods provide a natural solution to this question and make it possible to derive quantitative convergence rates in $L^1$.
The estimated rate depends only on the nonlinearity of the diffusion and the strength of the interaction kernel at long range.  
\end{abstract}

\section{Introduction} 
The most widely studied mathematical models of nonlocal aggregation phenomena are the Patlak-Keller-Segel (PKS) models, originally introduced to study the chemotaxis of microorganisms \cite{Patlak,KS,Hortsmann,HandP}. Similar models are also used to study the formation of herds and flocks in ecological systems \cite{Bio,Topaz,Milewski,GurtinMcCamy77}.
A common theme is the competition between the tendency for organisms to diffuse, e.g. under Brownian motion or to avoid over-crowding, 
and for organisms to aggregate into groups through nonlocal self-attraction.   
The parabolic-elliptic PKS models are a subclass of the general aggregation-diffusion equations
\begin{equation}
u_t + \grad \cdot (u \grad \K \ast u) = \Delta A(u). \label{def:ADD_general}
\end{equation}
The local and global existence and uniqueness of models such as \eqref{def:ADD_general} is well studied (see for instance \cite{BRB10,BertozziSlepcev10,Blanchet09,BlanchetEJDE06,SugiyamaDIE06,SugiyamaADE07,SugiyamaDIE07,Corrias04}).
However, less is known about the long-term qualitative behavior of solutions. 
In this work,  we are interested in examining the asymptotic profiles of dissipating solutions to \eqref{def:ADD_general} in the special case
\begin{equation} \label{def:ADD}
\left\{
\begin{array}{l}
  u_t + \grad \cdot (u \grad \K \ast u) = \Delta u^m, \;\; m \geq 1, \\ 
  u(0,x) = u_0(x) \in L_+^1(\Real^d;(1+\abs{x}^2)dx)\cap L^\infty(\Real^d),
\end{array}
\right.
\end{equation}
where  $L_+^1(\Real^d;\mu) := \set{f \in L^1(\Real^d;\mu): f \geq 0}$. 
In particular, we are interested in determining when solutions to \eqref{def:ADD} converge in $L^1(\Real^d)$ as $t \rightarrow \infty$ to the self-similar spreading solutions of the diffusion equation 
\begin{equation}
u_t = \Delta u^m \label{def:PME}.
\end{equation}
All dissipating solutions are weak$^\star$ converging to zero as $t \rightarrow \infty$, but this kind of result implies that for $1 << t < \infty$, the dissipating solutions all look more or less like self-similar solutions of \eqref{def:PME}. For this reason, these results are often referred to as \emph{intermediate asymptotics}. 

\emph{Supercritical} problems are those in which the aggregation is dominant at high concentrations, \emph{subcritical} problems are those in which the diffusion dominates at high concentrations, and \emph{critical} problems are those in which the effects are in approximate balance.
It is known that supercritical problems exhibit finite time blow up for solutions of arbitrarily small mass and subcritical problems have global solutions \cite{SugiyamaDIE06,SugiyamaADE07,BRB10,Blanchet09}.   
The critical case is more interesting; data with small mass exists globally, whereas finite time blow up is possible for large mass \cite{Blanchet09,BRB10,BlanchetEJDE06,SugiyamaADE07}.
In this work, we will refer to the case $m < 2-2/d$ as supercritical and $m = 2-2/d$ as critical. This is in contrast to the definition used in \cite{BRB10}, where the critical diffusion exponent was taken to depend on the singularity of the kernel. 
Here, achieving such a precise balance is not the primary interest and moreover we are concerned with examining the limit of low concentrations.
In the sense of \cite{BRB10,Blanchet09,SugiyamaADE07,SugiyamaDIE06}, $m = 2-2/d$ is the
critical exponent for the Newtonian potential, which is the most singular kernel known to have unique, local-in-time solutions \cite{BRB10}.

As strong nonlinearities vanish quickly near zero, scaling heuristics suggest that the nonlocal aggregation term should become irrelevant for small data in the critical and supercritical regime.
We use entropy-entropy dissipation methods \cite{CarrilloToscani98,Toscani99,CarrilloToscani00,CarrilloEntDiss01,CarrilloDiFranToscani06,BilerDolbeaultEsteban02} to obtain several intermediate asymptotics results which show this to be true, and that solutions of \eqref{def:ADD} converge to self-similar solutions of \eqref{def:PME}.
Entropy-entropy dissipation methods are well-suited for proving the convergence to equilibrium states of nonlinear Fokker-Plank-type equations for arbitrary data \cite{CarrilloToscani98,CarrilloEntDiss01}.  
Through a change of variables employed below, this also provides convergence to self-similarity of nonlinear homogeneous diffusion equations \cite{CarrilloToscani00}. 
In contrast to these works, we employ such methods to prove a \emph{small data} result, treating the nonlocal aggregation term as a perturbation. 
For this to work, sufficiently strong decay estimates on the solution must be obtained. 
Indeed, strong decay estimates imply the intermediate asymptotics results, and so we have chosen to state them separately in Theorem \ref{thm:Decay} below.   
Here, we obtain these estimates using iteration methods, discussed in more detail below, which are a refinement of the local theory of \eqref{def:ADD} (see e.g. \cite{Blanchet09,BRB10}).
While nonlinear, they are essentially perturbative in nature and thus somewhat limited against arbitrary data, using basic dissipation estimates to over-power the nonlocal advection term only under certain conditions. Analogous to related models, such as the nonlinear Schr\"odinger equations, it is likely a fully non-perturbative theory will need to be applied in order to treat large data, which is sometimes significantly more difficult (see for instance \cite{Tao,KillipVisanClay}).    
More details and discussion about the results and the methods of the proofs are discussed below in \S\ref{sec:StatResults} and \S\ref{sec:outline}.   

The first of our intermediate asymptotics results, Theorem \ref{thm:IA}, covers the case $\K \in W^{1,1}(\Real^d)$. 
Here, the nonlocal term can be considered to have a finite characteristic length-scale which becomes vanishingly small relative to the length-scale of the solution as it dissipates. 
A result similar to Theorem \ref{thm:IA} for $L^p, \; 1 < p < \infty$, was proved for the special case of the Bessel potential in \cite{LuckhausSugiyama06,LuckhausSugiyama07} with the soft compactness method of \cite{KaminVazquez88} (see also \cite{VazquezPME}).
In contrast to methods based on compactness, the entropy-entropy dissipation methods obtain quantitative convergence rates in $L^1$,
which by interpolation against the decay estimates, provides convergence in all $L^p$, $1 \leq p < \infty$. For supercritical problems, the convergence rate is shown to be the same as the optimal rates for \eqref{def:PME} \cite{CarrilloToscani98,Toscani99,CarrilloToscani00,CarrilloEntDiss01,VazquezPME}. 

In general, if the kernel does not have critical scaling at large length-scales, the long-range effects should still become irrelevant as the solution dissipates.
That is, we should expect results similar to the $\K \in W^{1,1}(\Real^d)$ case to hold, except when $m = 2-2/d$ and  $\grad \K \sim \abs{x}^{1-d}$ as $\abs{x} \rightarrow \infty$. Indeed, when $\K$ is the Newtonian potential, there exists at least one self-similar spreading solution to \eqref{def:ADD} when $m = 2-2/d$ \cite{BlanchetEJDE06,Blanchet09,BlanchetDEF10,CalvezCarrillo10}. In the presence of linear diffusion, these are additionally known to be the global attractors \cite{BlanchetEJDE06,BlanchetDEF10}. 
Theorem \ref{thm:IA2} below extends Theorem \ref{thm:IA} to the general case of $\K \not\in W^{1,1}(\Real^d)$, where the decay of $\K$ is characterized by $\gamma \in [d-1,d]$ such that $\abs{\grad \K(x)} = \mathcal{O}(\abs{x}^{-\gamma})$ as $\abs{x} \rightarrow \infty$. 
We show that if $\gamma > d-1$, then dissipating solutions converge to the self-similar spreading solutions of \eqref{def:PME}. 
However, in contrast to Theorem \ref{thm:IA}, the long-range effects appear to degrade the convergence rate and Theorem \ref{thm:IA2} provides a quantitative estimate of this effect in terms of $m$ and $\gamma$.
It is not known whether the rates obtained in Theorem \ref{thm:IA2} are sharp.
When $\gamma = d-1$, the kernel behaves like the Newtonian potential on large length-scales, and the result is no longer expected to hold if $m = 2-2/d$.
Indeed, we expect solutions to converge to the self-similar solutions of \eqref{def:ADD} constructed in \cite{BlanchetEJDE06,Blanchet09}. 
However, Theorem \ref{thm:IA2} asserts that in supercritical cases, self-similar solutions to \eqref{def:PME} again govern the intermediate asymptotics.
Thus, Theorem \ref{thm:IA2} provides intermediate asymptotics for Patlak-Keller-Segel models with linear diffusion in dimensions $d \geq 3$. 

As remarked above, the results of Theorems \ref{thm:IA} and \ref{thm:IA2} hold because for small data, the diffusion dominates the global dynamics. 
On the other hand, for subcritical problems, the aggregation can dominate at large length-scales, leading to the existence of nontrivial stationary solutions \cite{LionsCC84,BedrossianGlobalMin10}, which clearly violate strong decay estimates such as those stated below in Theorem \ref{thm:Decay}. 
However, between the results here and the work of \cite{LionsCC84,BedrossianGlobalMin10}, not every case is covered.
For instance, if $\K \in L^1(\Real^d)$ and $2-2/d < m < 2$, stationary solutions are only known to exist for sufficiently large mass, and the behavior of smaller solutions is unknown.
Moreover, convergence to these stationary solutions is only known in certain cases \cite{KimYao11}. 

In what follows, we denote $\norm{u}_p := \norm{u}_{L^p(\Real^d)}$ where $L^p(\Real^d) := L^p$ is the standard Lebesgue space. 
We will often suppress the dependencies of functions on space and/or time to enhance readability. 
The standard characteristic function for some $S \subset \Real^d$ is denoted $\mathbf{1}_{S}$ and we denote the ball $B_R(x_0) := \set{x \in \Real^d : \abs{x - x_0} < R}$.
In formulas we use the notation $C(p,k,M,..)$ to denote a generic constant, which may be different from line to line or term to term in the same formula. 
In general, these constants will depend on more parameters than those listed, for instance those which are fixed by the problem, such as $\K$ and the dimension, but these
dependencies are suppressed.
We use the notation $f \lesssim_{p,k,...} g$ to denote $f \leq C(p,k,..)g$ where again, dependencies that are not relevant are suppressed. 

\subsection{Statement of Results} \label{sec:StatResults}
We need the following definition from \cite{BRB10}, which we restate here. 
\begin{definition}[Admissible Kernel] \label{def:adm}
 We say a kernel $\K \in C^3 \setminus \set{0}$ is \emph{admissible} if $\K \in W^{1,1}_{loc}(\Real^d)$ and the following holds:
\begin{itemize}
\item[\textbf{(KN)}] $\K$ is radially symmetric, $\K(x) = k(\abs{x})$ and $k(\abs{x})$ is non-increasing.
\item[\textbf{(MN)}] $k^{\prime\prime}(r)$ and $k^\prime(r)/r$ are monotone on $r \in (0,\delta)$ for some $\delta > 0$. 
\item[\textbf{(BD)}] $\abs{D^3\K(x)} \lesssim \abs{x}^{-d-1}$. 
\end{itemize}
\end{definition}
The definition ensures that the kernel is radially symmetric, attractive, reasonably well-behaved at the origin and has second derivatives which define bounded convolution operators on $L^p$ for $1 < p < \infty$.
It is important to note that all admissible kernels satisfy $\grad \K \in \wkspace{\frac{d}{d-1}}$, where $\wkspace{p}$ denotes the weak-$L^p$ space, making the Newtonian potential effectively the most singular of admissible kernels \cite{BRB10}.
Provided $\K$ is admissible, for a given initial condition $u_0(x) \in L_+^1(\Real^d; (1 + \abs{x}^2)dx ) \cap L^\infty(\Real^d)$, 
\eqref{def:ADD} has a unique, local-in-time weak solution which satisfies $u(t) \in C([0,T);L_+^1(\Real^d;(1+\abs{x}^2)dx) )\cap L^\infty((0,T)\times\Real^d)$ \cite{BRB10,BertozziSlepcev10,BlanchetEJDE06,SugiyamaDIE07,BRB_2D_10}.
Moreover, $u(t)$ is a solution to \eqref{def:ADD} in a sense which is stronger than a distribution solution, which is important for obtaining a well-behaved local theory, but the distinction will not be important here \cite{BRB10,BertozziSlepcev10}. 
Weak solutions conserve mass and we define $M = \norm{u_0}_1 = \norm{u(t)}_1$.   

The self-similar solutions to the diffusion equation \eqref{def:PME} are well-known, see for instance \cite{VazquezPME} and \cite{CarrilloToscani00}.
In the linear case $m=1$, the self-similar solution is simply the heat kernel,
\begin{equation}
\mathcal{U}(t,x;M) = \frac{M}{(4\pi t)^{d/2}}e^{\frac{-\abs{x}^2}{4t}}. \label{def:Ulin} 
\end{equation}
In the case of degenerate diffusion $m > 1$, the self-similar solution is given by the Barenblatt solution,
\begin{equation}
\mathcal{U}(t,x;M) = t^{-\beta d}\left( C_1 - \frac{(m-1)\beta}{2m}\abs{x}^2t^{-2\beta}\right)_+^{\frac{1}{m-1}}, \label{def:UnonLin}
\end{equation}
where $C_1$ is determined from the conservation of mass and
\begin{equation}
\beta = \frac{1}{d(m-1) + 2}. \label{def:beta}
\end{equation}
Note that,
\begin{equation*}
\norm{\mathcal{U}(t;M)}_p \lesssim t^{-d\beta\left(1 - \frac{1}{p}\right)},
\end{equation*}
and so to provide a meaningful characterization of the convergence to self-similarity, quantitative estimates 
will be stated in terms of this relative scale.

The entropy-entropy dissipation methods of \cite{CarrilloToscani98,Toscani99,CarrilloToscani00,CarrilloEntDiss01}
were used to determine the optimal rate of convergence in $L^1(\Real^d)$ to self-similarity. 
That is, any solution $u(t)$ of \eqref{def:PME} satisfies
\begin{equation*}
t^{d\beta\left(1 - \frac{1}{p}\right)}\norm{u(t) - \mathcal{U}(t;M)}_p \lesssim (1+t)^{-\frac{2\beta}{p}\min\left(\frac{1}{2},\frac{1}{m}\right)}, \; \forall p, \, 1 \leq p < \infty.
\end{equation*}
This rate should be contrasted with the rates obtained in Theorems \ref{thm:IA} and \ref{thm:IA2}, where it is shown that kernels with finite length-scales do not have much effect on the rate, but strong nonlocal effects do.

In order to emphasize the relationship between decay estimates and intermediate asymptotics, we state them separately. 
Results similar to \textit{(i)} of Theorem \ref{thm:Decay} have been obtained in a variety of places, for example \cite{PerthameVasseur10,SugiyamaADE07,BlanchetDEF10}. 
Our estimates are obtained in a closely related but different way than existing work. We first rescale into the self-similar variables of the diffusion equation as in \cite{BlanchetDEF10}, and then adapt the Alikakos \cite{Alikakos} iteration techniques of \cite{Kowalczyk05,CalvezCarrillo06,Blanchet09,BRB10}, which are variants of fairly standard methods for obtaining uniform in time $L^\infty$ bounds for PKS models (see also \cite{JagerLuckhaus92,SugiyamaDIE06,SugiyamaADE07,SugiyamaDIE07}).
This approach to decay estimates has the advantage of naturally extending the existing methods used to obtain uniform bounds, and for a relatively mild increase in complexity, much stronger results are obtained. 
Here we use this advantage to also deduce a sufficient condition for decay estimates to hold in the critical case $2-2/d$, \textit{(ii)} of Theorem \ref{thm:Decay} below. 
For critical problems, uniform equi-integrability in time is known to be equivalent to global uniform boundedness for solutions to \eqref{def:ADD} \cite{CalvezCarrillo06,Blanchet09,BRB10}, and due to the similarities in the proof, we may state something analogous for decay estimates. 
Indeed, \eqref{def:EquiInTheta} is simply the requirement that the solution of the rescaled system remain uniformly equi-integrable. 
The proofs of Theorems \ref{thm:Decay},\ref{thm:IA} and \ref{thm:IA2} are outlined in more detail in \S\ref{sec:outline}. 
Remarks on the limitations and possible extensions are made after the statements. 

\begin{theorem}[Decay Estimates] \label{thm:Decay}
Let $d \geq 2$, $m \in [1,2-2/d]$ and $\K$ admissible. 
Let $u_0 \in L^1_+(\Real^d;(1+\abs{x}^2)dx)\cap L^\infty(\Real^d)$. 
\begin{itemize}
\item[(i)] There exists an $\epsilon_0 > 0$ (independent of $u_0$) such that if $\norm{u_0}_1 + \norm{u_0}_{(2-m)d/2} < \epsilon_0$, then the weak solution $u(t)$ to \eqref{def:ADD} which satisfies $u(0) = u_0$ is global and satisfies the decay estimate
\begin{equation}
\norm{u(t)}_\infty \lesssim (1+t)^{-d\beta}. \label{ineq:LinftyDecay} 
\end{equation}
\item[(ii)] If $m = 2-2/d$ and $u(t)$ is a global weak solution to \eqref{def:ADD}  which satisfies
\begin{equation}
\lim_{k \rightarrow \infty}\sup_{t \in [0,\infty)}\int \left(u(t,x) - k\left(\frac{t}{\beta}+1\right)^{d\beta}\right)_+ dx = 0, \label{def:EquiInTheta} 
\end{equation}
then $u(t)$ satisfies the decay estimate \eqref{ineq:LinftyDecay}.    
\end{itemize}
\end{theorem}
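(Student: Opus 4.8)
The plan is to pass to the self-similar variables of \eqref{def:PME} and reduce both parts to a single statement: uniform-in-time boundedness of the rescaled density. Setting $R(t) = (1+t)^\beta$, $s = \log(1+t)$, and
\[
u(t,x) = R(t)^{-d}\,v\!\left(s,\tfrac{x}{R(t)}\right),
\]
a direct computation turns \eqref{def:ADD} into the nonlinear Fokker--Planck equation with nonlocal drift
\[
\partial_s v = \Delta v^m + \beta\,\grad\cdot(yv) - \grad\cdot\!\left(v\,\grad\K_s \ast v\right),
\qquad \grad\K_s(z) = R^{1/\beta-1}\grad\K(Rz),
\]
whose stationary solutions are the rescaled Barenblatt profiles of \eqref{def:UnonLin}. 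In these variables \eqref{ineq:LinftyDecay} is exactly the assertion $\sup_s\norm{v(s)}_\infty < \infty$, since $\norm{u(t)}_\infty = R^{-d}\norm{v(s)}_\infty$. The scaling of the weak-$L^{d/(d-1)}$ norm together with $\grad\K\in\wkspace{d/(d-1)}$ shows that $\norm{\grad\K_s}_{\wkspace{d/(d-1)}}$ is bounded uniformly in $s$, indeed constant when $m = 2-2/d$ and decaying when $m < 2-2/d$; this is the precise sense in which $m = 2-2/d$ is borderline.

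I would then run an Alikakos iteration in $s$. Testing the rescaled equation against $pv^{p-1}$ gives, for $p\ge p_0 := (2-m)d/2$,
\[
\frac{d}{ds}\norm{v}_p^p = -\frac{4mp(p-1)}{(m+p-1)^2}\norm{\grad v^{(m+p-1)/2}}_2^2 + \beta d(p-1)\norm{v}_p^p + \mathcal{A}_p,
\]
where the first term is the diffusive dissipation, the second is the sign-definite, linearly growing confinement term created by the rescaling, and $\mathcal{A}_p = (p-1)\int\grad(v^p)\cdot(\grad\K_s\ast v)$ is the aggregation contribution. Using H\"older, the weak Young (Hardy--Littlewood--Sobolev) inequality with $\grad\K_s\in\wkspace{d/(d-1)}$, and Gagliardo--Nirenberg interpolation, I would absorb $\mathcal{A}_p$ into a small multiple of the dissipation plus lower-order terms, the small parameter being governed by $\norm{v}_{p_0}$. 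The exponent $p_0 = (2-m)d/2$ is exactly the Lebesgue scale at which aggregation and diffusion balance (note $p_0 = 1$ precisely when $m = 2-2/d$), which is why the base level decides whether the scheme closes.

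Controlling the base level is therefore the heart of the matter, and it is where the hypotheses enter. In case (i), smallness of $\norm{u_0}_1 + \norm{u_0}_{p_0} = \norm{v(0)}_1 + \norm{v(0)}_{p_0}$ keeps $\norm{v(s)}_{p_0}$ below the interpolation threshold for all $s$ by a continuity/bootstrap argument: once it is small, the dissipation dominates both $\mathcal{A}_{p_0}$ and the confinement term, so the critical norm cannot grow. In case (ii) we have $m = 2-2/d$, so $p_0 = 1$ and the base norm $\norm{v}_1 = M$ is conserved; the difficulty is the first genuine step above $L^1$, where the critical aggregation term has no room to spare. Here hypothesis \eqref{def:EquiInTheta} --- which, as noted before the statement, is exactly uniform-in-$s$ equi-integrability of $v$ --- supplies what smallness of the mass supplies in (i): splitting $v = (v-k)_+ + \min(v,k)$, the tail $(v-k)_+$ has uniformly small mass for $k$ large and plays the role of the small critical norm, while $\min(v,k)$ contributes only bounded, lower-order terms. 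This balancing at criticality, where the sharp constants in the Gagliardo--Nirenberg and Hardy--Littlewood--Sobolev inequalities must be tracked exactly, is the main obstacle.

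With $\sup_s\norm{v(s)}_{p_0} < \infty$ and integrable dissipation in hand, I would close the iteration. Feeding the bound on $\mathcal{A}_p$ back into the differential identity and converting $\norm{\grad v^{(m+p-1)/2}}_2^2$ into a superlinear power of $\norm{v}_p^p$ controlled by the previous level via Gagliardo--Nirenberg, I set $p_{k+1} = 2p_k$ and obtain a recursion of the form
\[
U_{k+1} \leq \left(C\,b^{\,k}\right)^{1/p_k}\max\!\left(U_k,1\right)^{\theta}, \qquad U_k := \sup_{s}\norm{v(s)}_{p_k},
\]
with $b$ and $\theta$ bounded independently of $k$; the superlinear dissipation beats both the linear confinement growth and the absorbed aggregation, so each $U_k$ is finite and $\lim_{k\to\infty}U_k = \sup_s\norm{v(s)}_\infty < \infty$, with a bound depending only on $M$ and $U_0$. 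Undoing the rescaling gives \eqref{ineq:LinftyDecay}.
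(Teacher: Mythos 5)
Your overall architecture matches the paper's: the same self-similar rescaling (your $\grad\K_s(z)=R^{1/\beta-1}\grad\K(Rz)$ is exactly the paper's $e^{(1-\alpha-\beta)\beta^{-1}\tau}e^{d\tau}\grad\K(e^{\tau}\cdot)$), $L^p$ control at the base exponent $p_0=(2-m)d/2$ via a continuity argument whose closing mechanism in the supercritical case is the time-decay of the rescaled interaction (your observation that $\norm{\grad\K_s}_{\wkspace{d/(d-1)}}$ scales like $R^{1/\beta-d}$ with $1/\beta - d\le 0$ is a clean restatement of the paper's use of $1-\alpha<0$), and the level-set truncation $(v-k)_+$ for part \textit{(ii)}. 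Up to the statement $\sup_s\norm{v(s)}_p<\infty$ for each fixed $p<\infty$, your plan is essentially the paper's proof.

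The gap is in the passage from $L^p$, $p<\infty$, to $L^\infty$. You propose to absorb $\mathcal{A}_{p_k}$ into the dissipation at every level, ``the small parameter being governed by $\norm{v}_{p_0}$,'' and then assert the recursion $U_{k+1}\le (Cb^k)^{1/p_k}\max(U_k,1)^{\theta}$. But after H\"older, the Calder\'on--Zygmund (or HLS) estimate and Gagliardo--Nirenberg, the aggregation term at level $p$ contributes $C_1(p)\norm{v}_{p_0}^{2-m}\,\mathcal{I}$ (the exponent $\alpha_2(p+1)=2-m$ is $p$-independent, but $C_1(p)$ is not: the Calder\'on--Zygmund and Gagliardo--Nirenberg constants at level $p_k=2^k p_0$ grow at least geometrically in $k$), so the smallness of $\norm{v}_{p_0}$ needed for the absorption degenerates as $k\to\infty$ and the scheme cannot be closed uniformly over the iteration; moreover, as written, a recursion with a fixed exponent $\theta>1$ on $\max(U_k,1)$ diverges --- one needs the effective exponent $\theta p_k/p_{k+1}\le 1$. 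The missing idea, which is the content of the paper's Lemma on $L^\infty$ control, is to first prove that the drift $\vec{v}=\grad\K_s\ast v$ is itself in $L^\infty_{s,y}$, via $L^p$ bounds on both $\vec{v}$ and $\grad\vec{v}$ for one fixed $p>d$ (the latter from the rescaled Calder\'on--Zygmund lemma) combined with Morrey's inequality; with a bounded drift the advection term is genuinely lower order at every level and a Kowalczyk/Alikakos iteration closes with no further smallness. Your uniform $\wkspace{d/(d-1)}$ bound on $\grad\K_s$ does not substitute for this step: weak Young's inequality fails at the $L^\infty$ endpoint, and for admissible kernels with $\grad\K\notin L^1$ the drift bound additionally requires splitting the kernel into near- and far-field parts and checking that the $L^1_{loc}$ growth $e^{(d-\gamma)\tau}$ of the rescaled kernel is cancelled by the decaying prefactor (the inequality $1-\beta-\gamma\beta\le 0$), which is precisely where the hypothesis on the far-field decay of $\grad\K$ enters the decay estimate.
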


Once the decay estimate \eqref{ineq:LinftyDecay} has been established, entropy-entropy dissipation methods can be adapted to deduce the following intermediate asymptotics theorems, as the decay estimate provides sufficient control of the nonlocal terms. 

\begin{theorem}[Intermediate Asymptotics I: Finite Length-Scale] \label{thm:IA} 
Let $d \geq 2$, $m \in [1,2 - 2/d]$ and $\K \in W^{1,1}$ be admissible. 
Suppose $u(t)$ is a global weak solution of \eqref{def:ADD} which satisfies the decay estimate \eqref{ineq:LinftyDecay}.  
If $m < 2-2/d$, then $u(t)$ satisfies
\begin{equation}
t^{d\beta\left(1 - \frac{1}{p}\right)}\norm{u(t) - \mathcal{U}(t;M)}_p \lesssim (1+t)^{-\frac{\beta}{p}}, \; \forall p, \, 1 \leq p < \infty,  \label{ineq:conv_supercrit_IA}
\end{equation}  
and if $m = 2-2/d$, then for all $\delta > 0$, $u(t)$ satisfies
\begin{equation}
t^{d\beta\left(1 - \frac{1}{p}\right)}\norm{u(t) - \mathcal{U}(t;M)}_p \lesssim_{\delta} (1+t)^{-\frac{\beta}{p}(1-\delta)}, \;\; \forall p, \, 1 \leq p < \infty.  \label{ineq:conv_IA}
\end{equation}  
Here $\beta$ is defined in \eqref{def:beta} 
and $\mathcal{U}(x,t;M)$ is the self-similar solution to \eqref{def:PME} with mass $M = \norm{u_0}_1$ given in \eqref{def:Ulin} or \eqref{def:UnonLin}. 
\end{theorem}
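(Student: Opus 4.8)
The plan is to pass to the self-similar variables of \eqref{def:PME} and treat the aggregation term as a \emph{decaying} perturbation of the associated nonlinear Fokker--Planck flow, to which the entropy--entropy dissipation machinery of \cite{CarrilloToscani00,CarrilloEntDiss01} applies. Write $s(t) = (1+t)^\beta$, set $\tau = \log s(t)$ and rescale $u(t,x) = s^{-d}v(\tau,y)$ with $y = x/s$, so that mass is preserved and $\norm{u(t) - \mathcal{U}(t;M)}_1 = \norm{v(\tau) - v_\infty}_1$, where $v_\infty$ is the stationary Barenblatt (or Gaussian) profile. A direct computation turns \eqref{def:ADD} into $v_\tau = \tfrac1\beta\Delta v^m + \grad\cdot(yv) + \grad\cdot(vV)$, where the effective drift $V(\tau,y) = -\tfrac1\beta(1+t)^{1-\beta}\int(\grad\K)(s(y-y'))v(\tau,y')\,dy'$ collects the rescaled aggregation. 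The key scaling observation is that, since $\grad\K\in L^1$ and the decay estimate \eqref{ineq:LinftyDecay} gives $\norm{v(\tau)}_\infty\lesssim1$ uniformly, Young's inequality yields $\norm{V(\tau)}_\infty\lesssim e^{-\theta\tau}$ with $\theta = d(2-m)-1$. On the range $m\in[1,2-2/d]$ one has $\theta\geq1$, with $\theta>1$ precisely in the supercritical regime $m<2-2/d$ and $\theta=1$ in the critical case.

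Next I would introduce the relative entropy $\mathcal{H}[v\,|\,v_\infty] = E[v]-E[v_\infty]\ge0$ associated with the free energy $E[v] = \int (\tfrac1{\beta(m-1)}v^m + \tfrac12\abs{y}^2 v)\,dy$ (with $v\log v$ replacing the first term when $m=1$), together with its dissipation $\mathcal{I}[v] = \int v\,\abs{\grad\xi}^2\,dy$, where $\xi = \tfrac{m}{\beta(m-1)}v^{m-1} + \tfrac12\abs{y}^2$ is the first variation. Differentiating along the rescaled flow gives the identity $\tfrac{d}{d\tau}\mathcal{H} = -\mathcal{I} + \mathcal{R}$ with perturbation $\mathcal{R} = -\int v\,\grad\xi\cdot V\,dy$. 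For the unperturbed flow ($\mathcal{R}\equiv0$) the generalized logarithmic Sobolev / Gagliardo--Nirenberg inequality of \cite{CarrilloToscani00,CarrilloEntDiss01} supplies the entropy--dissipation bound $\mathcal{I}\ge 2\mathcal{H}$ (the sharp rate reproducing the $L^1$ diffusion rate), hence $\mathcal{H}\lesssim e^{-2\tau}$, which by the Csisz\'ar--Kullback--Pinsker inequality $\norm{v-v_\infty}_1^2\lesssim\mathcal{H}$ recovers the optimal rate $(1+t)^{-\beta}$ in $L^1$.

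The heart of the argument is to show that the perturbation obeys $\abs{\mathcal{R}}\lesssim e^{-\theta\tau}\mathcal{H}^{1/2}$. Using the Euler--Lagrange relation $v_\infty\grad\xi_\infty=0$ on $\set{v_\infty>0}$ that defines $v_\infty$, I would write $v\grad\xi = \tfrac1\beta\grad(v^m-v_\infty^m) + y(v-v_\infty)$, so that every factor paired with the decaying drift $V$ carries the deviation $v-v_\infty$. The moment term $\int y(v-v_\infty)\cdot V$ is controlled by $\norm{V}_\infty$ times the weighted norm $\int\abs{y}\,\abs{v-v_\infty}\,dy$, which is bounded by $\mathcal{H}^{1/2}$ through a weighted Csisz\'ar--Kullback estimate, using uniform-in-$\tau$ second-moment bounds that propagate from $u_0\in L^1(\Real^d;(1+\abs{x}^2)dx)$ and the attractivity of $\K$. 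Feeding $\abs{\mathcal{R}}\lesssim e^{-\theta\tau}\mathcal{H}^{1/2}$ into $\tfrac{d}{d\tau}\mathcal{H}\le -2\mathcal{H} + Ce^{-\theta\tau}\mathcal{H}^{1/2}$ and setting $\psi = \mathcal{H}^{1/2}$ linearizes the inequality to $\psi'\le-\psi + \tfrac C2 e^{-\theta\tau}$. When $\theta>1$ the convolution $\int_0^\tau e^{-(\tau-\sigma)}e^{-\theta\sigma}\,d\sigma$ stays $\lesssim e^{-\tau}$, giving $\psi\lesssim e^{-\tau}$ and the clean rate \eqref{ineq:conv_supercrit_IA}; when $\theta=1$ the integral equals $\tau e^{-\tau}$, the logarithmic resonance producing the arbitrarily small loss $(1+t)^{-\frac\beta p(1-\delta)}$ of \eqref{ineq:conv_IA}. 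The statement for general $p$ then follows by interpolating the $L^1$ bound against \eqref{ineq:LinftyDecay}, since $\norm{\mathcal{U}(t;M)}_\infty\lesssim(1+t)^{-d\beta}$ as well.

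I expect the main obstacle to be precisely the bound $\abs{\mathcal{R}}\lesssim e^{-\theta\tau}\mathcal{H}^{1/2}$, and specifically its \emph{diffusive} piece $\tfrac1\beta\int\grad(v^m-v_\infty^m)\cdot V$. The naive estimate $\abs{\int v\grad\xi\cdot V}\le\mathcal{I}^{1/2}(\int v\abs{V}^2)^{1/2}\lesssim e^{-\theta\tau}\mathcal{I}^{1/2}$ only controls $\mathcal{R}$ by the \emph{dissipation} rather than the entropy; absorbing $e^{-\theta\tau}\mathcal{I}^{1/2}$ through Young's inequality necessarily sacrifices a fraction of the sharp constant in $\mathcal{I}\ge2\mathcal{H}$ and degrades the rate to $(1+t)^{-\beta(1-\delta)}$ for every $m$, which is too weak for the supercritical claim. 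Extracting the genuine $\mathcal{H}^{1/2}$ (rather than $\mathcal{I}^{1/2}$) smallness of this term --- exploiting that $\grad\K$ has zero mean, so that $V$ behaves like a mollified derivative carrying an extra factor $s^{-1}$, together with the uniform bound $\norm{v}_\infty\lesssim1$ --- is the delicate estimate on which the sharpness of \eqref{ineq:conv_supercrit_IA} rests.
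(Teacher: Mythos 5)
Your framework — rescaling to self-similar variables, differentiating the relative entropy, invoking the generalized log-Sobolev inequality $\mathcal{I}\geq 2\mathcal{H}$ and Csisz\'ar--Kullback, then interpolating against \eqref{ineq:LinftyDecay} — is exactly the paper's, and your computation of the decay exponent $\theta = d(2-m)-1 = (\alpha+\beta-1)\beta^{-1}$ of the drift is correct. But the proof as written has a genuine gap at precisely the point you flag as "the heart of the argument": the bound $\abs{\mathcal{R}}\lesssim e^{-\theta\tau}\mathcal{H}^{1/2}$ is asserted, not proved. The two ingredients you would need — a weighted Csisz\'ar--Kullback inequality controlling $\int\abs{y}\,\abs{v-v_\infty}\,dy$ by $\mathcal{H}^{1/2}$ (which requires uniform-in-$\tau$ second-moment control that you do not establish), and, more seriously, control of the diffusive piece $\int\grad(v^m-v_\infty^m)\cdot V$ by $\mathcal{H}^{1/2}$ rather than $\mathcal{I}^{1/2}$ — are both left open, and you yourself identify the second as the delicate step. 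As it stands, the argument does not close.

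The deeper issue is that your stated reason for rejecting the "naive" Cauchy--Schwarz estimate is wrong, and that estimate is in fact the paper's route. You claim that absorbing $Ce^{-\theta\tau}\mathcal{I}^{1/2}$ via Young's inequality "necessarily sacrifices a fraction of the sharp constant in $\mathcal{I}\geq 2\mathcal{H}$ and degrades the rate... for every $m$." It does not, provided the Young weight is taken \emph{time-dependent}: writing
\begin{equation*}
Ce^{-\theta\tau}\mathcal{I}^{1/2} \leq e^{-2\delta\tau}\mathcal{I} + \tfrac{C^2}{4}e^{-(2\theta-2\delta)\tau}
\end{equation*}
yields $\frac{d}{d\tau}\mathcal{H} \leq -2(1-e^{-2\delta\tau})\mathcal{H} + Ce^{-(2\theta-2\delta)\tau}$, and since $\int_0^\infty e^{-2\delta\sigma}\,d\sigma = (2\delta)^{-1}<\infty$, the integrating factor differs from $e^{-2\tau}$ only by a $\delta$-dependent constant. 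Hence $\mathcal{H}(\tau)\lesssim_\delta e^{-\min(2,\,2\theta-2\delta)\tau}$: when $m<2-2/d$ one has $\theta>1$, so choosing $\delta$ small gives the sharp $e^{-2\tau}$ and the clean rate \eqref{ineq:conv_supercrit_IA}; when $m=2-2/d$ one has $\theta=1$ and the $\delta$-loss of \eqref{ineq:conv_IA} appears. This is the entire content of the paper's proof of Theorem \ref{thm:IA}, together with the observation \eqref{ineq:velocity_bounded} that $\norm{e^{d\tau}\grad\K(e^\tau\cdot)\ast\theta}_\infty\leq\norm{\grad\K}_1\norm{\theta}_\infty\lesssim 1$. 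So the hard estimate you set up for yourself is unnecessary; replacing it with the time-weighted absorption repairs the proof and recovers the stated rates.
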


\begin{theorem}[Intermediate Asymptotics II: Infinite Length-Scales] \label{thm:IA2}
Let $d \geq 2$ and $\K$ be admissible with $\grad\K(x) = \mathcal{O}(\abs{x}^{-\gamma})$ as $\abs{x} \rightarrow \infty$ for some $\gamma \in [d-1,d]$. If $\gamma = d-1$ then suppose $m  \in [1,2-2/d)$ and otherwise we may take $m \in [1,2-2/d]$.
Suppose $u(t)$ is a global weak solution of \eqref{def:ADD} which satisfies the decay estimate \eqref{ineq:LinftyDecay}.  
Then, for all $\delta>0$,  $u(t)$ satisfies
\begin{equation}
t^{d\beta\left(1 - \frac{1}{p}\right)}\norm{u(t) - \mathcal{U}(t;M)}_p \lesssim_{\delta} (1+t)^{-\frac{\beta}{p}\min\left(1, 1+\gamma - \beta^{-1} - \delta \right)}, \;\; \forall p, \, 1 \leq p < \infty. \label{ineq:conv_IA2}
\end{equation}  
Here $\beta$ and $\mathcal{U}(t,x;M)$ are as above. 
\end{theorem}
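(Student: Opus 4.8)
The plan is to run the entropy--entropy dissipation argument exactly as for Theorem \ref{thm:IA}, isolating the single place where the slow decay of $\K$ enters and produces the degraded exponent. First I would pass to the self-similar variables of \eqref{def:PME}, writing $u(t,x) = R(t)^{-d}v(\tau,y)$ with $y = x/R(t)$, $R(t) = (t/\beta+1)^\beta$ and $\tau = \log R$, where $\beta$ is given by \eqref{def:beta}. This turns \eqref{def:ADD} into a nonlinear Fokker--Planck equation $\partial_\tau v = \Delta_y v^m + \grad_y\cdot(y v) + R^{\beta^{-1}-1}\grad_y\cdot\!\big(v\,(\grad\K\ast u)(R\,\cdot)\big)$ whose stationary state is the Barenblatt profile $V(y)$ obtained from \eqref{def:UnonLin}, and it turns the decay estimate \eqref{ineq:LinftyDecay} into the uniform-in-$\tau$ bound $\norm{v(\tau)}_\infty \lesssim 1$ together with conservation of mass $\norm{v(\tau)}_1 = M$.

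Next I would introduce the relative entropy $\mathcal{H}[v\,|\,V] = \int\big(\tfrac{1}{m-1}(v^m - V^m) - \tfrac{m}{m-1}V^{m-1}(v-V)\big)\,dy + \tfrac12\int\abs{y}^2(v-V)\,dy$ and differentiate along the flow. The diffusion and confinement terms produce the usual nonpositive dissipation $-\mathcal{D}[v]$, and the generalized logarithmic-Sobolev/Gagliardo--Nirenberg inequality of the cited references gives $\mathcal{D}[v]\geq 2\,\mathcal{H}[v\,|\,V]$; this is the mechanism behind the ``$1$'' inside the minimum in \eqref{ineq:conv_IA2}, as in the pure diffusion case. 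The nonlocal term contributes a remainder with two pieces, coming from the two parts $\grad_y\big(\tfrac{m}{m-1}v^{m-1}\big)$ and $\grad_y\big(\tfrac12\abs{y}^2\big) = y$ of the entropy variable. The entire problem reduces to bounding this remainder and showing its decay rate is governed by $\sigma := 1+\gamma-\beta^{-1}$.

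The main obstacle, and the only genuinely new estimate relative to Theorem \ref{thm:IA}, is the control of the confinement piece of the remainder, which after undoing the spatial rescaling equals $R^{\beta^{-1}-1}\int y\cdot v\,(\grad\K\ast u)(R y)\,dy = R^{\beta^{-1}-2}\,\tfrac12\iint (x-y)\cdot\grad\K(x-y)\,u(x)u(y)\,dx\,dy$, where I have used the antisymmetry of $\grad\K$ to symmetrize into the virial (second-moment) integral. Here the hypotheses enter sharply: on the near field the singular factor $(x-y)\cdot\grad\K(x-y)$ is controlled uniformly using $\grad\K\in\wkspace{d/(d-1)}$ together with $\norm{v}_\infty\lesssim 1$ from \eqref{ineq:LinftyDecay}, while on the far field the tail bound $\abs{\grad\K(x)} = \mathcal{O}(\abs{x}^{-\gamma})$ and mass conservation give virial $\lesssim R^{1-\gamma}$, so the whole term is $\lesssim R^{-(1+\gamma-\beta^{-1})} = e^{-\sigma\tau}$. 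The remaining pressure-gradient piece I would bound by Cauchy--Schwarz against $\mathcal{D}[v]^{1/2}$ and absorb a fraction of $\mathcal{D}$ via Young's inequality. It is worth noting that the condition $\sigma>0$ is equivalent to the theorem's hypotheses: when $\gamma=d-1$ one computes $\sigma = d(2-m)-2$, which is positive precisely when $m<2-2/d$, whereas $m=2-2/d,\ \gamma=d-1$ gives $\sigma=0$ and no decay, explaining that exclusion.

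Collecting these estimates yields the scalar differential inequality $\tfrac{d}{d\tau}\mathcal{H} \leq -\mathcal{H} + C e^{-\sigma\tau}\big(1 + \mathcal{H}^{1/2}\big)$, equivalently $h' \leq -h + C e^{-\sigma\tau}$ for $h = \mathcal{H}^{1/2}$. Solving by comparison gives $\mathcal{H}[v(\tau)\,|\,V]^{1/2}\lesssim e^{-\min(1,\sigma)\tau}$, where the resonant case $\sigma=1$ produces a factor $\tau e^{-\tau}$ that is absorbed into the arbitrarily small loss, accounting for the $-\delta$ appearing in the exponent of \eqref{ineq:conv_IA2}. The Csisz\'ar--Kullback--Pinsker inequality then gives $\norm{v(\tau)-V}_1\lesssim \mathcal{H}^{1/2}$; undoing the rescaling with $e^{-\tau} = (t/\beta+1)^{-\beta}$ yields the $p=1$ case, and interpolating the resulting $L^1$ rate against the $L^\infty$ decay \eqref{ineq:LinftyDecay} through $\norm{f}_p \leq \norm{f}_1^{1/p}\norm{f}_\infty^{1-1/p}$ produces \eqref{ineq:conv_IA2} for all $1\leq p<\infty$.
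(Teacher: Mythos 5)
Your overall strategy is the paper's: rescale to self-similar variables, differentiate the relative entropy, control the nonlocal remainder using the uniform bound from \eqref{ineq:LinftyDecay} together with a near-field/far-field splitting of the kernel, close with Theorems \ref{thm:rel_entropy} and \ref{thm:CK}, and interpolate. Your kernel estimates, the identification $\sigma = 1+\gamma-\beta^{-1}$, and the observation that $\sigma>0$ exactly encodes the hypothesis at $\gamma=d-1$ are all correct. But there is a genuine gap in how you treat the remainder, and it costs you half the convergence rate.

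The problem is the virial bound on the confinement piece. Estimating $R^{\beta^{-1}-2}\iint (x-y)\cdot\grad\K(x-y)\,u\,u\,dx\,dy$ directly gives an \emph{additive} forcing term $Ce^{-\sigma\tau}$ in the differential inequality for $\mathcal{H}$ (your ``$1$'' inside $(1+\mathcal{H}^{1/2})$), and likewise your pressure-piece estimate produces another additive $Ce^{-\sigma\tau}$, since $\int v\abs{\tfrac{m}{m-1}\grad v^{m-1}}^2$ is not the dissipation $I$ but only $I$ plus a bounded second moment. An inequality of the form $\mathcal{H}'\leq -2\mathcal{H}+Ce^{-\sigma\tau}$ yields $\mathcal{H}\lesssim e^{-\min(2,\sigma)\tau}$, hence by Csisz\'ar--Kullback only $\norm{v-V}_1\lesssim e^{-\min(1,\sigma/2)\tau}$, which is strictly weaker than the claimed $e^{-\min(1,\sigma-\delta)\tau}$ whenever $\sigma<2$. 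Your asserted equivalence with $h'\leq -h+Ce^{-\sigma\tau}$ for $h=\mathcal{H}^{1/2}$ is false precisely because of that additive constant: dividing by $2\mathcal{H}^{1/2}$ produces an $\mathcal{H}^{-1/2}$ singularity. The paper avoids this by never splitting the entropy variable: it keeps $\tfrac{m}{m-1}\grad\theta^{m-1}+\eta$ together, applies Cauchy--Schwarz to the \emph{entire} remainder to get a bound of the form $e^{-\sigma\tau}I(\theta)^{1/2}$ (using $\norm{e^{d\tau}\grad\K(e^\tau\cdot)\ast\theta}_\infty\lesssim e^{(d-\gamma)\tau}$, obtained from the same near/far splitting you use, applied to the kernel rather than to the virial integrand), and then uses Young's inequality with the time-dependent weight $e^{-2\delta\tau}I$. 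The forcing term then decays like $e^{-2(\sigma-\delta)\tau}$ --- the prefactor gets squared --- and Gronwall plus Csisz\'ar--Kullback delivers the full rate $e^{-\min(1,\sigma-\delta)\tau}$. The moral is that near equilibrium the remainder is itself small because $I^{1/2}\approx(2\mathcal{H})^{1/2}$ is small; your direct virial bound discards that smallness. Your argument is repairable by deleting the splitting and running the Cauchy--Schwarz/Young step on the whole remainder, at which point it coincides with the paper's proof.
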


\begin{remark}
Note that $u_0 \in L_+^1(\Real^d; (1 + \abs{x}^2)dx )\cap L^\infty(\Real^d)$ implies $u_0 \log u_0 \in L^1(\Real^d)$ by Jensen's inequality. 
\end{remark}

\begin{remark}
The convergence rate in \eqref{ineq:conv_supercrit_IA} is optimal, as it matches that of the corresponding diffusion equation. 
Optimality is not known for \eqref{ineq:conv_IA} or \eqref{ineq:conv_IA2}, however we suspect that these rates are nearly optimal.
Note that the convergence rate obtained in \eqref{ineq:conv_IA2} reduces to \eqref{ineq:conv_supercrit_IA} and \eqref{ineq:conv_IA} when $\gamma = d$. 
Moreover, if $\gamma = d-1$, then the convergence rate goes to zero as $m \nearrow 2 -2/d$. 
\end{remark}

\begin{remark}
In some cases, it may be possible to show that the size conditions of \textit{(i)} of Theorem \ref{thm:Decay} are only required of the $L^{(2-m)d/2}$ norm, which is the norm left invariant by the scaling symmetry of \eqref{def:ADD} when $\K$ is the Newtonian potential.
Indeed, one can prove global existence and uniform boundedness of solutions to \eqref{def:ADD} in these supercritical regimes using iteration techniques without any smallness assumption on the mass \cite{Corrias04,SugiyamaDIE06,SugiyamaADE07}.  
However, in \cite{LionsCC84} it is shown that depending on the singularity of the kernel, if the mass is sufficiently large, there may exist stationary solutions for all $m$, $1 < m < 2$.  
Since these are potentially included in our analysis, in order to state decay results in the generality we have chosen to, a smallness condition on the mass seems to be necessary. 
\end{remark}

\begin{remark}
The results of \cite{BRB10} suggest that if the kernel $\K$ is less singular than the Newtonian potential at the origin, the $L^{(2-m)d/2}$ norm could, in some cases, possibly be replaced by a weaker one. 
\end{remark}

\begin{remark}
In the critical case $m = 2-2/d$, in general $\epsilon_0$ will be strictly less than the critical mass \cite{BlanchetEJDE06,Blanchet09,BRB10}.
As shown in \cite{CalvezCarrillo06,BlanchetEJDE06,Blanchet09,BRB10}, equi-integrability conditions such as \eqref{def:EquiInTheta} allow one to use the dissipated free energy to deduce global control up to the sharp critical mass.
If $\K$ is the Newtonian potential and $d \geq 3$, then the energy dissipation inequality is sufficient to imply \eqref{def:EquiInTheta} and therefore solutions satisfy \eqref{ineq:LinftyDecay} all the way up to the critical mass \cite{Blanchet09}.  
Unfortunately, the energy dissipation inequality does not always seem to imply \eqref{def:EquiInTheta} in general. 
\end{remark}

\begin{remark}
We consider only the case of power-law diffusion, however, the estimate \eqref{ineq:LinftyDecay} holds for \eqref{def:ADD_general} provided $A^\prime(z) \geq cz^{m-1}$ for some $c > 0$. 
Therefore, it is likely possible to apply the methods of \cite{BilerDolbeaultEsteban02,CarrilloDiFranToscani06} to this more general case under 
some structural assumptions on $A$.
\end{remark}

\subsection{Outline of Proof} \label{sec:outline}  
The proof of Theorems \ref{thm:IA} and \ref{thm:IA2} involves several steps. 
As mentioned above, we use the entropy-entropy dissipation methods of \cite{CarrilloToscani98,CarrilloToscani00,CarrilloEntDiss01} and in particular, the time-dependent rescaling used in \cite{CarrilloToscani00}. 
All of the computations will be formal, they can be made rigorous for weak solutions either with a suitable parabolic regularization and passing to the limit, as in for instance \cite{BertozziSlepcev10,BRB10,CarrilloEntDiss01,BRB_2D_10}, or presumably also lifting to strictly positive solutions, as is common in the study of the porous media equation \cite{VazquezPME}.  

Following \cite{CarrilloToscani00}, we define $\theta(\tau,\eta)$ such that
\begin{equation}
e^{-d\tau}\theta(\tau,\eta) = u(t,x), \label{def:theta}
\end{equation}
with coordinates $e^\tau\eta = x$ and $\beta e^{\beta^{-1}\tau} - \beta = t$, where $\beta$ is given by \eqref{def:beta}. In what follows we denote $\alpha := d\beta$. 
In these coordinates, if $u(t,x)$ solves \eqref{def:ADD} then $\theta(\tau,\eta)$ solves, 
\begin{equation}
\partial_\tau \theta = \grad \cdot (\eta \theta) + \Delta \theta^m - e^{(1 - \alpha - \beta)\beta^{-1}\tau} \grad \cdot(\theta (\conv{e^{d\tau}\grad\K(e^{\tau} \cdot)}{\theta})). \label{eq:asymptotic_pde}
\end{equation}
Moreover, $\mathcal{U}(t,x;M)$ is stationary in these coordinates, and will be denoted by $\theta_M(\eta)$. 
That is (see \cite{CarrilloToscani00}), 
\begin{equation}
\mathcal{U}(t,x;M) = \left(1 + \frac{t}{\beta}\right)^{-d\beta}\theta_M\left( \left(1 + \frac{t}{\beta}\right)^{-\beta}x\right) = e^{-d\tau}\theta_M(\eta). \label{eq:BarenblattToGS}
\end{equation}
In fact, $\theta_M(\eta)$ is the unique non-negative solution with mass $M$ to the (degenerate, if $m > 1$) elliptic equation
\begin{equation}
0 = \grad \cdot (\eta \theta) + \Delta \theta^m. \label{eq:ground_state}
\end{equation}
In what follows we will refer to $\theta_M$ as the \emph{ground state Barenblatt solution}. 
Clearly, ground state solutions are stationary solutions of the homogeneous Fokker-Plank equation 
\begin{equation}
\partial_\tau \theta = \grad \cdot (\eta \theta) + \Delta \theta^m. \label{def:HFkP} 
\end{equation} 
Therefore, the asymptotic convergence to self-similar profiles of solutions to \eqref{def:PME} is equivalent to the convergence to the stationary profiles of \eqref{def:HFkP}. 
This was the fundamental observation made in \cite{CarrilloToscani00} and is the purpose of the rescaling \eqref{def:theta}.

A primary step to proving Theorems \ref{thm:IA} and \ref{thm:IA2} is establishing
that $\theta(\tau,\eta) \in L^\infty_{\tau,\eta}(\Real^+ \times \Real^d)$. 
By the change of variables, this is precisely the decay estimate \eqref{ineq:LinftyDecay} stated in Theorem \ref{thm:Decay}. 
This estimate is what allows us to treat the inhomogeneous non-local term in \eqref{eq:asymptotic_pde} 
as a vanishing perturbation of \eqref{def:HFkP}. 
The decay estimate $\norm{u(t)}_\infty \lesssim t^{-d\beta}$, or equivalently, $\norm{\theta(\tau)}_\infty \lesssim 1$, is easily obtained for \eqref{def:PME} in the linear case and the classical Aronson-B\'enilan estimate proves it in the case $m > 1$ \cite{VazquezPME}. 
Clearly, no such analogues are available for \eqref{eq:asymptotic_pde}, and they will instead be provided by Theorem \ref{thm:Decay}.  
To prove Theorem \ref{thm:Decay}, we adapt the Alikakos iterations of \cite{Kowalczyk05,CalvezCarrillo06,Blanchet09,BRB10} to \eqref{eq:asymptotic_pde} to prove a uniform bound in the rescaled variables. 
Dealing with time-dependent rescalings in \eqref{eq:asymptotic_pde} introduces several complications. 
Obtaining $L^p$ estimates for the critical case $m = 2-2/d$ is relatively straightforward due to the inherent 
scale-invariance of the relevant inequalities.
In the supercritical case $m < 2-2/d$, the effect of the time-dependent rescaling in \eqref{eq:asymptotic_pde} 
is crucial for closing a key bootstrap/continuity argument necessary to control the solution uniformly in time.
It is at this step that the method below diverges significantly from existing methods and is a key step to obtaining Theorem \ref{thm:Decay}.   
The additional issue when $\grad \K \not\in L^1$ is in obtaining $L^\infty$ estimates, which requires measuring the rate at which $e^{d\tau}\grad \K(e^{\tau}\eta)$ blows up in $L^1_{loc}$ as $\tau \rightarrow \infty$. 
This also arises later when we estimate how much the nonlocal term affects the entropy dissipation and is the source of the degraded convergence rates found in Theorem \ref{thm:IA2}.  
 
Once we have established $\theta(\tau,\eta) \in L^\infty_{\tau,\eta}(\Real^+ \times \Real^d)$, we prove that solutions to \eqref{eq:asymptotic_pde} converge to $\theta_M$ 
and estimate the convergence rate in $L^1$. 
In fact, these are done together, as the quantitative estimate is direct and removes the need for compactness arguments. 
The primary step of the entropy dissipation method is an estimate of the decay of the entropy associated to \eqref{def:HFkP}. 
In the case $m = 1$, the entropy is given by, 
\begin{equation}
H(\theta) = \int \theta \log \theta d\eta + \frac{1}{2}\int\abs{\eta}^2\theta d\eta, \label{def:H_linear}
\end{equation}
and the entropy production functional by 
\begin{equation}
I(\theta) = \int \theta \abs{\grad \log \theta +  \eta}^2 d\eta. \label{def:I_linear} 
\end{equation}
In the nonlinear case $m > 1$, the corresponding quantities are, 
\begin{equation}
H(\theta) = \frac{1}{m-1}\int \theta^m d\eta + \frac{1}{2}\int \abs{\eta}^2 \theta d\eta, \label{def:H}
\end{equation}
and the entropy production functional, 
\begin{equation}
I(\theta) = \int u\abs{\frac{m}{m-1}\grad u^{m-1} + \eta}^2 d\eta. \label{def:I}
\end{equation}
In the nonlinear case, these entropies were originally introduced for studying \eqref{def:HFkP} in \cite{Newman84,Ralston84}.
Both \eqref{def:H_linear} and \eqref{def:H} are displacement convex \cite{McCann97} and in fact,
\eqref{def:HFkP} is a gradient flow for \eqref{def:H} or \eqref{def:H_linear} in the Euclidean Wasserstein distance \cite{Otto01,AmbrosioGigliSavare,CarrilloMcCannVillani03,CarrilloMcCannVillani06}, and 
if $f(\tau,\eta)$ solves \eqref{def:HFkP}, then 
\begin{equation*}
\frac{d}{d\tau}H(f(\tau)) = -I(f(\tau)). 
\end{equation*}
For a given mass $M$, \eqref{def:H} has a unique non-negative minimizer which is the ground state $\theta_M$. 
That is, if we define the relative entropy 
\begin{equation}
H(\theta|\theta_M) = H(\theta) - H(\theta_M),  \label{def:relativeH}
\end{equation}
then $H(\theta|\theta_M) \geq 0$ with equality if and only if $\theta = \theta_M$ \cite{CarrilloToscani00,DelPinoDolbeault02}. 
In order to estimate a convergence rate, it is therefore sensible to measure how quickly $H(\theta|\theta_M) \rightarrow 0$.
Following the methods of \cite{CarrilloToscani00,CarrilloEntDiss01,CarrilloMcCannVillani03,CarrilloMcCannVillani06}, this is made possible by the following two theorems. 
The first relates 
the entropy production functional \eqref{def:I} to the relative entropy \eqref{def:relativeH}.
This represents a generalization of the Gross logarithmic inequality \cite{Gross75} (see also \cite{DelPinoDolbeault02}).

\begin{theorem}[Generalized Gross Logarithmic Sobolev Inequality \cite{CarrilloToscani00,CarrilloEntDiss01,DelPinoDolbeault02,Gross75}] \label{thm:rel_entropy}
Let $f\in L_+^1(\Real^d)$ with $\norm{f}_1 = M$ and let $\theta_M$ be the ground state Barenblatt solution with mass $M$. Then, 
\begin{equation}
H(f|\theta_M)  \leq \frac{1}{2} I(f). \label{ineq:relative_entropy}
\end{equation}   
\end{theorem}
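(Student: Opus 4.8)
The plan is to exploit the gradient-flow structure emphasized above: $\theta_M$ is the unique minimizer of $H$ at fixed mass $M$, the functional $H(\cdot\,|\theta_M)$ is uniformly displacement convex with modulus $\lambda = 1$, and $I$ is precisely the square of the metric slope of $H$ in the Euclidean Wasserstein distance $W_2$. The inequality \eqref{ineq:relative_entropy} is then the standard consequence of $1$-uniform convexity relating a functional's excess above its minimum to the square of its gradient.

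First I would record the Euler--Lagrange characterization of $\theta_M$. Since $\theta_M$ solves \eqref{eq:ground_state}, which may be written $\grad\cdot(\eta\theta_M + \grad\theta_M^m) = 0$ with vanishing flux, on the support $\set{\theta_M > 0}$ one has $\eta + \frac{m}{m-1}\grad\theta_M^{m-1} = 0$, equivalently $\frac{m}{m-1}\theta_M^{m-1} + \frac12\abs{\eta}^2 = C_M$. In particular the integrand in \eqref{def:I} vanishes, so $I(\theta_M) = 0$, consistent with $\theta_M$ being a critical point of $H$. The displacement convexity of $\frac{1}{m-1}\int\theta^m$ for $m \geq 1 - 1/d$ (which holds here since $m \geq 1$) together with the fact that the confinement term $\frac12\int\abs{\eta}^2\theta$ has Hessian equal to the identity along $W_2$-geodesics yields that $H$, and hence $H(\cdot\,|\theta_M)$, is $1$-uniformly displacement convex.

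The key step is the HWI inequality, valid for $\lambda$-uniformly displacement convex functionals: for any admissible $f$ with mass $M$,
\begin{equation*}
H(f\,|\theta_M) \;\leq\; W_2(f,\theta_M)\sqrt{I(f)} \;-\; \frac{\lambda}{2}\,W_2(f,\theta_M)^2 .
\end{equation*}
Treating $w = W_2(f,\theta_M) \geq 0$ as a free parameter and maximizing the right-hand side $w\sqrt{I(f)} - \frac{\lambda}{2}w^2$ over $w$ gives the bound $\frac{1}{2\lambda}I(f)$; setting $\lambda = 1$ produces exactly \eqref{ineq:relative_entropy}. Equivalently, one may run the homogeneous Fokker--Planck flow \eqref{def:HFkP} from $f$, use $\frac{d}{d\tau}H(f(\tau)) = -I(f(\tau))$, establish the Bakry--\'Emery differential inequality $\frac{d}{d\tau}I(f(\tau)) \leq -2I(f(\tau))$ coming from the curvature lower bound $\grad^2(\frac12\abs{\eta}^2) = \mathrm{Id}$, and integrate from $0$ to $\infty$ using $H(f(\tau)) \to H(\theta_M)$ and $I(f(\tau)) \to 0$; this yields $H(f\,|\theta_M) = \int_0^\infty I(f(\tau))\,d\tau \leq \frac12 I(f)$.

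I expect the main obstacle to be the rigorous justification of the Bochner/commutator computation behind $\frac{d}{d\tau}I \leq -2I$ in the degenerate case $m > 1$: the ground state $\theta_M$ is compactly supported, so $\theta_M^{m-1}$ is only H\"older-continuous across the free boundary and the integrations by parts that produce the second-order terms must be carried out carefully, typically on a strictly positive, smooth approximation and then passed to the limit, as in the porous-medium regularizations mentioned earlier. Identifying $I$ with the squared $W_2$-slope of $H$, and establishing displacement convexity and the HWI inequality at the generality needed here, likewise requires the machinery of Wasserstein gradient flows; I would invoke these facts from the cited works rather than rebuild them, and concentrate the effort on verifying that our $f$, controlled by the decay and moment bounds available from Theorem \ref{thm:Decay}, lies in the admissible class to which those results apply.
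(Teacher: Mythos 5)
The paper does not actually prove Theorem \ref{thm:rel_entropy}; it is imported wholesale from the cited literature (\cite{CarrilloToscani00,CarrilloEntDiss01,DelPinoDolbeault02,Gross75}) and used as a black box. Your reconstruction is nonetheless a faithful sketch of how those references establish it, and both of the routes you describe are correct: the HWI route (uniform displacement convexity of $H$ with modulus $\lambda=1$ from the confinement term $\tfrac12\int\abs{\eta}^2\theta\,d\eta$ plus McCann's convexity of the internal energy for $m\geq 1-1/d$, followed by optimizing $w\sqrt{I(f)}-\tfrac{\lambda}{2}w^2$ in $w=W_2(f,\theta_M)$) and the Bakry--\'Emery route (run \eqref{def:HFkP} from $f$, use $\frac{d}{d\tau}H=-I$ and $\frac{d}{d\tau}I\leq -2I$, integrate in $\tau$). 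Your Euler--Lagrange identification of $\theta_M$ and the observation $I(\theta_M)=0$ are also right. The one place where the sketch is genuinely incomplete is the one you flag yourself: for $m>1$ the Bochner-type computation behind $\frac{d}{d\tau}I\leq -2I$, and the identification of $I$ as the squared Wasserstein slope of $H$, require regularization across the degenerate free boundary; since you defer exactly those points to the cited works, your argument sits at the same level of rigor as the paper's own treatment (a citation), so there is no gap relative to what the paper does. If anything, your version is more informative, since it records why the constant is $\tfrac12$ (it is $\tfrac{1}{2\lambda}$ with $\lambda=1$).
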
  

For the Fokker-Plank equation \eqref{def:HFkP}, Theorem \ref{thm:rel_entropy} implies  $H(\theta(\tau)|\theta_M) \lesssim e^{-2\tau}$.
The (generalized) Csiszar-Kullback inequality \cite{Csiszar67,Kullback67} relates the relative entropy to the $L^1$ norm.

\begin{theorem}[Csiszar-Kullback Inequality \cite{CarrilloEntDiss01}] \label{thm:CK}
Let $f\in L_+^1(\Real^d)$ with $\norm{f}_1 = M$ and let $\theta_M$ be the ground state Barenblatt solution with mass $M$. 
Then,  
\begin{equation}
\norm{f - \theta_M}_1 \lesssim H(f|\theta_M)^{\min\left(\frac{1}{2},\frac{1}{m}\right)}. 
\end{equation}   
\end{theorem}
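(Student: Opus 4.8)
The plan is to reduce the stated inequality to a pointwise convexity estimate for the energy density and then apply a weighted Cauchy--Schwarz argument, which for $m=1$ is exactly the classical Csisz\'ar--Kullback--Pinsker computation. Write the energy density as $\phi(s) = s\log s$ when $m=1$ and $\phi(s) = \tfrac{1}{m-1}s^m$ when $m>1$, so that $H(f) = \int \phi(f)\,d\eta + \tfrac12\int\abs{\eta}^2 f\,d\eta$ and $\phi'(s) = \log s + 1$ or $\tfrac{m}{m-1}s^{m-1}$. The first step is to rewrite the relative entropy as an integrated Bregman divergence. Since $f$ and $\theta_M$ share the mass $M$, subtracting $C_1\int(f-\theta_M)\,d\eta = 0$ and adding and subtracting $\int \phi'(\theta_M)(f-\theta_M)\,d\eta$ gives
\begin{equation*}
H(f|\theta_M) = \int \Bigl[\phi(f)-\phi(\theta_M)-\phi'(\theta_M)(f-\theta_M)\Bigr]\,d\eta + \int \Bigl[\phi'(\theta_M)+\tfrac12\abs{\eta}^2 - C_1\Bigr](f-\theta_M)\,d\eta,
\end{equation*}
where $C_1$ is the constant in the Euler--Lagrange relation for \eqref{eq:ground_state}. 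On $\mathrm{supp}\,\theta_M$ that relation reads $\phi'(\theta_M)+\tfrac12\abs{\eta}^2 = C_1$, so the second integrand vanishes there; off the support one has $\theta_M=0$, $\phi'(0)=0$ (for $m>1$), and $\tfrac12\abs{\eta}^2 \geq C_1$, so the second integrand equals $(\tfrac12\abs{\eta}^2-C_1)f \geq 0$. Hence $H(f|\theta_M)$ equals the integral of the nonnegative Bregman divergence $D(f,\theta_M) := \phi(f)-\phi(\theta_M)-\phi'(\theta_M)(f-\theta_M)$ plus a nonnegative moment contribution supported on $\{\theta_M=0\}$.

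The second step is the pointwise lower bound. By Taylor's theorem $D(f,\theta_M) = \tfrac12\phi''(\xi)(f-\theta_M)^2$ for some $\xi$ between $f$ and $\theta_M$, with $\phi''(s)=1/s$ or $m s^{m-2}$. For $1\le m\le 2$ the exponent $m-2\le 0$ makes $\phi''$ nonincreasing, so $\xi^{m-2}\ge \max(f,\theta_M)^{m-2}$ and therefore $D(f,\theta_M)\gtrsim \max(f,\theta_M)^{m-2}(f-\theta_M)^2$; this bound also holds on $\{\theta_M=0\}$, where it reduces to $D=\tfrac{1}{m-1}f^m$. Applying Cauchy--Schwarz with weight $\max(f,\theta_M)^{2-m}$,
\begin{equation*}
\norm{f-\theta_M}_1 \le \Bigl(\int \max(f,\theta_M)^{m-2}(f-\theta_M)^2\,d\eta\Bigr)^{1/2}\Bigl(\int \max(f,\theta_M)^{2-m}\,d\eta\Bigr)^{1/2} \lesssim H(f|\theta_M)^{1/2},
\end{equation*}
which is the exponent $\min(1/2,1/m)=1/2$. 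As $m = 2-2/d < 2$ throughout the regime $m\in[1,2-2/d]$ relevant to this paper, this is the only case needed in the applications. For completeness, when $m>2$ one instead uses the $m$-uniform convexity of $s\mapsto s^m$ on $[0,\infty)$, giving $D(f,\theta_M)\gtrsim\abs{f-\theta_M}^m$ and hence $\norm{f-\theta_M}_m^m\lesssim H(f|\theta_M)$; converting this into the $L^1$ bound with exponent $1/m$ via a Hölder/splitting argument against the $L^1$ and $L^\infty$ bounds is the content of the general Csisz\'ar--Kullback inequality of \cite{CarrilloEntDiss01}.

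The step I expect to be most delicate is the finiteness and control of the auxiliary weight $\int \max(f,\theta_M)^{2-m}\,d\eta$ (for $1<m<2$) and, more generally, the behaviour near the free boundary $\partial\,\mathrm{supp}\,\theta_M$ and on the region $\{\theta_M = 0,\, f>0\}$, where $\theta_M$ degenerates and $\phi''$ blows up ($m<2$) or vanishes ($m>2$). It is exactly this degeneracy that causes the exponent to change at $m=2$ and produces the factor $\min(1/2,1/m)$. The weight integral is made finite using the compact support of the Barenblatt profile $\theta_M$ together with the decay and second-moment bounds on $f$, so that the suppressed constant in $\lesssim$ depends on $\norm{f}_\infty$, $\norm{f}_1$ and $\int\abs{\eta}^2 f\,d\eta$; in the application these are uniformly bounded in $\tau$ by the decay estimate \eqref{ineq:LinftyDecay}. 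The contribution near the free boundary is matched by retaining the moment term $\int_{\{\theta_M=0\}}(\tfrac12\abs{\eta}^2 - C_1)f\,d\eta$ from the first step, which dominates $\int_{\{\theta_M=0\}}f$ away from the boundary, while the Bregman term $\tfrac{1}{m-1}f^m$ absorbs the small-$f$ contribution near it.
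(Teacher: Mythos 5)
The paper does not actually prove Theorem \ref{thm:CK}; it is imported verbatim from \cite{CarrilloEntDiss01}, so there is no internal proof to compare against. Measured against the standard argument in that reference, your proposal follows the same route: rewrite $H(f|\theta_M)$ as a Bregman divergence plus a nonnegative moment term supported on $\{\theta_M=0\}$ using the Euler--Lagrange relation $\phi'(\theta_M)+\tfrac12\abs{\eta}^2=C_1$, bound the Bregman integrand from below by Taylor's theorem and the monotonicity of $\phi''$, and conclude by weighted Cauchy--Schwarz. The decomposition, the sign check off the support, and the pointwise bound $D(f,\theta_M)\gtrsim \max(f,\theta_M)^{m-2}(f-\theta_M)^2$ for $1\le m\le 2$ are all correct.

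There is, however, one step that fails as written: the global Cauchy--Schwarz requires $\int_{\Real^d}\max(f,\theta_M)^{2-m}\,d\eta<\infty$, and you assert this is controlled by $\norm{f}_\infty$, $\norm{f}_1$ and $\int\abs{\eta}^2 f\,d\eta$. That is false in general for $1<m<2$ in higher dimensions. Take $d=4$, $m=2-2/d=3/2$, and $f(\eta)\sim\abs{\eta}^{-6-\epsilon}$ at infinity: then $f$ is admissible (bounded, integrable, finite second moment), but $f^{2-m}=f^{1/2}\sim\abs{\eta}^{-3-\epsilon/2}$ is not integrable on $\Real^4$, so your right-hand side is $+\infty$ and the displayed estimate is vacuous. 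The repair is exactly the one you sketch in your closing paragraph but do not actually execute: apply the Cauchy--Schwarz step only on a fixed neighborhood $B_{2R}$ of $\mathrm{supp}\,\theta_M$ (where the weight integral is bounded by $\norm{f}_\infty^{2-m}\abs{B_{2R}}$ plus a $\theta_M$ contribution), and on $\Real^d\setminus B_{2R}$ bound $\int f$ directly by the retained term $\int_{\{\theta_M=0\}}(\tfrac12\abs{\eta}^2-C_1)f\,d\eta\le H(f|\theta_M)$, since $\tfrac12\abs{\eta}^2-C_1$ is bounded below by a positive constant there; the thin annulus between $\mathrm{supp}\,\theta_M$ and $B_{2R}$ is handled by $\int_{\{\theta_M=0\}}f^m\lesssim H$ together with H\"older, giving a harmless $H^{1/m}\le H^{1/2}$ for $H\le 1$ (the regime of interest, the inequality being trivial for $H$ bounded below since $\norm{f-\theta_M}_1\le 2M$). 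With that localization carried through, your argument is a complete and correct proof for the range $1\le m\le 2$ used in this paper.
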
 
Note that since we are interested in $1 \leq m \leq 2-2/d$, we will only apply the inequality with exponent $1/2$. 

To prove Theorems \ref{thm:IA} and \ref{thm:IA2}, the purpose of proving $\theta(\tau,\eta) \in L^\infty_{\tau,\eta}(\Real^+ \times \Real^d)$ is to control 
the growth of $\norm{e^{d\tau}\grad\K(e^{\tau}\cdot)\ast \theta}_\infty$, which depends on the long-range effects of the kernel. Ultimately, this provides a bound essentially of the form, 
\begin{equation*}
\frac{d}{d\tau}H(\theta(\tau)) \leq -I(\theta(\tau)) + C(M,\norm{\theta}_{L_{\tau,\eta}^\infty(\Real^+ \times \Real^d)})e^{-\gamma \tau}, 
\end{equation*}
for some $\gamma > 0$ (in reality, it is not quite as clean). Theorem \eqref{thm:rel_entropy} then implies, 
\begin{equation*}
\frac{d}{d\tau}H(\theta(\tau)|\theta_M) \leq -2H(\theta(\tau)|\theta_M) + C(M,\norm{\theta}_{L_{\tau,\eta}^\infty(\Real^+ \times \Real^d)})e^{-\gamma \tau}. 
\end{equation*}
Integrating this and applying Theorem \ref{thm:CK} implies,
\begin{equation*}
\norm{\theta - \theta_M}_1 \lesssim e^{-\frac{\tau}{2}\min\left(2,\gamma\right)}, 
\end{equation*} 
which after rescaling and interpolation against the decay estimate \eqref{ineq:LinftyDecay}, will prove Theorems \ref{thm:IA} and \ref{thm:IA2}. 

\section{Preliminary Decay Estimates} 
Let $\overline{q} = (2-m)d/2$ and let $\eta, \tau$ and $\theta(\tau,\eta)$ be as defined in \S\ref{sec:outline}. 
As detailed above, we establish that $\theta(\tau,\eta) \in L^\infty_{\tau,\eta}(\Real^+\times\Real^d)$ using Alikakos iteration \cite{Alikakos} (see also \cite{JagerLuckhaus92,Kowalczyk05,BRB10,Blanchet09,SugiyamaADE07,SugiyamaDIE07,SugiyamaDIE06}). 
The first step is to prove the following lemma which allows control over $L^p$ norms with $p < \infty$. In what follows we denote $\theta_0(\eta) := \theta(\eta,0) = u(x,0)$. 
\begin{lemma}[Control for $L^p$, $p < \infty$ for small data] \label{lem:finite_p_bounded}
For all $\overline{q} \leq p < \infty$, there exists $C_{\overline{q}} = C_{\overline{q}}(p,M)$ and $C_M = C_M(p,\norm{\theta_0}_{\overline{q}})$ such that if $\norm{\theta_0}_{\overline{q}} < C_{\overline{q}}$ and $M < C_M$, then 
$\norm{\theta(\tau)}_{p} \in L_\tau^\infty(\Real^+)$. 
\end{lemma}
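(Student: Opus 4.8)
The plan is to run a weighted $L^p$ energy estimate directly on the rescaled equation \eqref{eq:asymptotic_pde} and to close it with a continuity argument that converts the smallness of $M$ and $\norm{\theta_0}_{\overline{q}}$ into a uniform-in-$\tau$ bound. First I would multiply \eqref{eq:asymptotic_pde} by $p\theta^{p-1}$ and integrate over $\Real^d$, integrating by parts in each term. The confining drift $\grad\cdot(\eta\theta)$ yields the growth term $(p-1)d\norm{\theta}_p^p$, the diffusion yields the favorable dissipation $-\tfrac{4m(p-1)}{(p+m-1)^2}\norm{\grad\theta^{(p+m-1)/2}}_2^2$, and moving the divergence in the nonlocal term onto $\theta^p$ gives
\begin{align*}
\frac{d}{d\tau}\norm{\theta(\tau)}_p^p = {}& (p-1)d\norm{\theta}_p^p - \frac{4m(p-1)}{(p+m-1)^2}\norm{\grad\theta^{(p+m-1)/2}}_2^2 \\
& - c(\tau)(p-1)\int \theta^p\,\grad\cdot\mathbf{v}\,d\eta,
\end{align*}
where $\mathbf{v} = (e^{d\tau}\grad\K(e^\tau\cdot))\ast\theta$ and $c(\tau)=e^{(1-\alpha-\beta)\beta^{-1}\tau}=e^{(d(m-2)+1)\tau}$. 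The whole difficulty is to dominate the growth term and the nonlocal term by the dissipation, uniformly in $\tau$.

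Second, I would control the nonlocal term by a scaling argument. Since $\K$ is admissible, $\grad\K\in\wkspace{\frac{d}{d-1}}$, and dilation gives $\norm{e^{d\tau}\grad\K(e^\tau\cdot)}_{L^{\frac{d}{d-1},\infty}}=e^\tau\wknorm{\grad\K}{\frac{d}{d-1}}$; combined with $c(\tau)$ the time-dependent factors collapse to the net prefactor $e^{d(m-(2-2/d))\tau}\le 1$, with equality exactly in the critical case $m=2-2/d$. Using the weak Young (Hardy--Littlewood--Sobolev) inequality to estimate $\mathbf{v}$ in terms of $\theta$, then Hölder and the Gagliardo--Nirenberg--Sobolev inequality applied to $\theta^{(p+m-1)/2}$, I would bound the nonlocal term by $\varepsilon\norm{\grad\theta^{(p+m-1)/2}}_2^2$ plus lower-order terms, with $\varepsilon$ a multiple of $\norm{\theta}_{\overline{q}}$ (the norm left invariant by the critical scaling) and a power of $M$. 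Smallness of these norms lets the nonlocal contribution be absorbed into the dissipation; the scale-invariance of the critical case (net prefactor $\equiv 1$) is precisely why it is the cleanest.

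Third, I would close the estimate in two stages. For the base exponent $p=\overline{q}$ I would run a continuity/bootstrap argument in $\tau$: supposing $\norm{\theta(s)}_{\overline{q}}$ remains below a small threshold for $s\le\tau$, the previous step makes the nonlocal term a genuine perturbation, and applying GNS (interpolating the dissipation against the conserved mass $M=\norm{\theta}_1$) produces a term of the form $\norm{\theta}_{\overline{q}}^{\overline{q}/a}$ with $1/a>1$ that dominates the confining growth $(\overline{q}-1)d\norm{\theta}_{\overline{q}}^{\overline{q}}$; smallness of $\norm{\theta_0}_{\overline{q}}$ and $M$ then shows the threshold is never reached, giving $\norm{\theta(\tau)}_{\overline{q}}\in L^\infty_\tau$. (In the critical case $\overline{q}=1$ the base norm is just the conserved mass, and no argument is needed.) With the scaling-invariant norm under uniform control, for general $p>\overline{q}$ the same computation yields a Bernoulli-type inequality $\frac{d}{d\tau}\norm{\theta}_p^p \le C_1\norm{\theta}_p^p - C_2\norm{\theta}_p^{p/a}$, $1/a>1$ (the lower GNS endpoint now being the controlled $L^{\overline{q}}$ norm), whose solutions are bounded uniformly in $\tau$ by $\max(\norm{\theta_0}_p^p,(C_1/C_2)^{a/(1-a)})$; the thresholds $C_{\overline{q}}$ and $C_M$ are chosen in the coupled manner indicated in the statement.

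I expect the main obstacle to be the supercritical base case $m<2-2/d$, where $\overline{q}>1$ and the problem is not scale-invariant. Here the confining growth $(\overline{q}-1)d\norm{\theta}_{\overline{q}}^{\overline{q}}$ cannot be matched term-by-term, and closing the continuity argument hinges on the \emph{strict} decay of the net prefactor $e^{d(m-(2-2/d))\tau}$, i.e. on the time-dependent rescaling itself, to make the nonlocal term summable in $\tau$ while the GNS dissipation keeps the growth in check. A secondary technical point is that $\K$ is only assumed admissible (so $\grad\K\in\wkspace{\frac{d}{d-1}}$ rather than $L^1$), which forces the use of weak Young/HLS estimates and requires checking that the GNS exponents are admissible, i.e. that $\overline{q}\le p$ lies strictly below the relevant Sobolev exponent, throughout the range.
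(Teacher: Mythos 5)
Your proposal follows the same architecture as the paper's proof: an $L^p$ energy estimate on \eqref{eq:asymptotic_pde}, the Gagliardo--Nirenberg--Sobolev inequality (Lemma \ref{lem:GNS}) applied so that the nonlocal term is absorbed into the dissipation with a prefactor controlled by $\norm{\theta}_{\overline{q}}$, a lower bound on the dissipation via interpolation against the conserved mass yielding a Bernoulli-type differential inequality, and a continuity argument at the base exponent $p=\overline{q}$ that in the supercritical case is closed only by the strict decay of the prefactor $e^{(1-\alpha)\beta^{-1}\tau}=e^{d(m-2+2/d)\tau}$ --- you correctly isolate this last point as the crux, exactly as the paper does. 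The one genuine divergence is the treatment of the nonlocal term: the paper keeps $\int\theta^p\,\grad\cdot\vec{v}\,d\eta$ and bounds $\norm{\grad\vec{v}}_{p+1}\lesssim e^{\tau}\norm{\theta}_{p+1}$ via the Calder\'on--Zygmund estimate of Lemma \ref{lem:CZ_rescale}, then applies H\"older and GNS so the dissipation appears with exponent exactly $1$; you instead propose to estimate $\vec{v}$ itself by weak Young/HLS from $\grad\K\in\wkspace{d/(d-1)}$ and Cauchy--Schwarz against the dissipation, which is the standard alternative (essentially the Sugiyama--Blanchet computation) and also works. One caution: your displayed identity retains $\grad\cdot\vec{v}$ while your prose estimates $\vec{v}$; if you actually keep $\grad\cdot\vec{v}$ you cannot use weak Young, since for Newtonian-type admissible kernels $D^2\K$ lies only in $\wkspace{1}$ where weak Young fails --- this is precisely why the paper proves Lemma \ref{lem:CZ_rescale} --- so you must first integrate by parts onto $\grad(\theta^p)$ before invoking the weak-type bound. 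A second, cosmetic difference: at the base case the paper does not propagate a uniform smallness bound but allows $\norm{\theta(\tau)}_{\overline{q}}^{\overline{q}}$ to grow linearly in $\tau$ and lets the exponentially decaying prefactor absorb it, whereas your Bernoulli closure propagates a uniform small bound directly; both close.
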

\begin{proof}
Define 
\begin{equation*}
\mathcal{I} = \int \theta^{m-1}\abs{\grad \theta^{p/2}}^2 dx. 
\end{equation*}
We estimate the time evolution of $\norm{\theta}_p$ using integration by parts, H\"older's inequality and Lemma \ref{lem:CZ_rescale} in the appendix, 
\begin{align}
\frac{d}{d\tau} \norm{\theta}_{p}^{p} & = -\frac{4mp}{(p+1)^2}\mathcal{I} + (p-1)e^{(1-\alpha-\beta)\beta^{-1}\tau}\int\theta^p\grad \cdot (e^{d\tau}\grad \K(e^{\tau} \cdot) \ast \theta) d\eta + d(p-1)\norm{\theta}_p^p \nonumber \\
& \leq -C(p)\mathcal{I} + C(p)e^{(1-\alpha-\beta)\beta^{-1}\tau}\norm{\theta}_{p+1}^p\norm{\grad(e^{d\tau}\grad\K(e^{\tau} \cdot)\ast \theta)}_{p+1} + C(p)\norm{\theta}_p^p \nonumber \\
& \leq  -C(p)\mathcal{I} + C(p)e^{(1-\alpha)\beta^{-1}\tau}\norm{\theta}_{p+1}^{p+1} + C(p)\norm{\theta}_p^p. \label{ineq:Lpevo}
\end{align}
We bound the second term using the using the homogeneous Gagliardo-Nirenberg-Sobolev inequality (Lemma \ref{lem:GNS} in appendix),
\begin{equation}
\norm{\theta}_{p+1}^{p+1} \lesssim \norm{\theta}_{\overline{q}}^{\alpha_2(p+1)}\mathcal{I}^{\alpha_1(p+1)/2}, \label{ineq:GNS1}
\end{equation}
where $\alpha_2 = 1 - \alpha_1(p+m-1)/2$ and 
\begin{equation*}
\alpha_1 = \frac{2d(\overline{q}-p-1)}{(p+1)\left(\overline{q}(d-2) - d(p+m-1)\right)}. 
\end{equation*}
By the definition of $\overline{q}$ we have that, 
\begin{equation*}
\frac{\alpha_1(p+1)}{2} = \frac{d(\overline{q}-p-1)}{\overline{q}(d-2) - d(p+m-1)} = 1. 
\end{equation*}
We also estimate the second term in \eqref{ineq:Lpevo} using Lemma \ref{lem:GNS}, 
\begin{equation}
\norm{\theta}_{p}^{p} \lesssim M^{\beta_2 p}\mathcal{I}^{\beta_1p/2}, \label{ineq:gns_below}
\end{equation}
where $\beta_2 = 1 - \beta_1p/2$ and, 
\begin{equation*}
\frac{\beta_1 p}{2} = \frac{d(p-1)}{2-d+d(p+m-1)} < 1,
\end{equation*}
by $1 - 2/d < m$. Then applying weighted Young's inequality we have from \eqref{ineq:GNS1}, \eqref{ineq:gns_below} and \eqref{ineq:Lpevo}, 
\begin{equation}
\frac{d}{d\tau} \norm{\theta}_{p}^{p} \leq \left(C_1(p)e^{(1-\alpha)\beta^{-1}\tau}\norm{\theta}_{\overline{q}}^{\alpha_2(p+1)} - C_2(p) \right)\mathcal{I} + C_3(p)M^{\gamma(p)}, \label{ineq:theta_p}
\end{equation}
for $\gamma(p) = 2\beta_2p/(2-\beta_1p) > 0$.
If $m = 2- 2/d$, then $\overline{q} = 1$ and $1 - \alpha = 0$, therefore by conservation of mass it is possible to choose $M$ sufficiently small such that the first term in \eqref{ineq:theta_p} is less than $-\delta \mathcal{I}$ for some $\delta > 0$.
If $m < 2 -2/d$, then $\overline{q} > 1$ and $\norm{\theta}_{\overline{q}}$ is no longer conserved. Here we must take advantage of $1 - \alpha < 0$. Note that \eqref{ineq:theta_p} holds for $p = \overline{q}$; therefore since $1 - \alpha < 0$, a continuity argument establishes that for $\norm{\theta_0}_{\overline{q}}$ and $M$ sufficiently small, 
\begin{equation*}
\norm{\theta(\tau)}_{\overline{q}}^{\overline{q}} \leq \norm{\theta_0}_{\overline{q}}^{\overline{q}} + C_3(\overline{q})M^{\gamma(\overline{q})} \tau. 
\end{equation*}
Indeed, for $\norm{\theta_0}_{\overline{q}}$ small, this holds for at least some time, and for $M$ sufficiently small, this linear growth is such that the first term in \eqref{ineq:theta_p} remains non-positive forever. 
Then by \eqref{ineq:theta_p} for $p > \overline{q}$, if $M$ and $\norm{\theta_0}_{\overline{q}}$ additionally satisfy
\begin{equation*}
C_1(p)e^{(1-\alpha)\beta^{-1}\tau}(C_3(\overline{q})M^{\gamma(\overline{q})}\tau + \norm{\theta_0}_{\overline{q}}^{\overline{q}})^{\alpha_2(p+1)/\overline{q}} - C_2(p) < -\delta, 
\end{equation*}
for all $\tau > 0$, then the first term is less than $-\delta I$. By $1-\alpha < 0$ we may always choose $M$ and $\norm{\theta_0}_{\overline{q}}$ such that this is possible.
Therefore, whether $\overline{q} > 1$ or $\overline{q} = 1$, for small initial data in the suitable sense, we have
\begin{equation*}
\frac{d}{d\tau} \norm{\theta}_{p}^{p} \leq -\delta \mathcal{I} + C(M,p). 
\end{equation*} 
Using \eqref{ineq:gns_below} and Young's inequality for products, we have a lower bound on $\mathcal{I}$,
\begin{equation*}
\norm{\theta}_{p}^{p} - C(M) \leq \mathcal{I}. 
\end{equation*}
This proves,
\begin{equation*}
\frac{d}{d\tau} \norm{\theta}_{p}^{p} \leq -\delta\norm{\theta}_p^p + C(M,p), 
\end{equation*}
which immediately concludes the lemma with $\norm{\theta}_p^p \leq \max(\norm{\theta_0}_p^p, C(M,p)\delta^{-1})$.  
\end{proof}

We now turn to proving that \eqref{def:EquiInTheta} implies something analogous to Lemma \ref{lem:finite_p_bounded}. 
Let $u(t)$ be as in \textit{(ii)} of Theorem \ref{thm:Decay}. One can verify that \eqref{def:EquiInTheta} is equivalent to 
\begin{equation}
\lim_{k \rightarrow \infty} \sup_{\tau \in [0, \infty)} \norm{\left(\theta(\tau) - k\right)_+}_1 = 0, \label{def:theta_equi}
\end{equation}
which is precisely the condition of uniform equi-integrability which plays a key role in \cite{CalvezCarrillo06,Blanchet09,BRB10}. 
We may refine Lemma \ref{lem:finite_p_bounded} in the following fashion, adapting the techniques in \cite{CalvezCarrillo06,Blanchet09,BRB10} to this setting.   
\begin{lemma}[Control for $L^p$, $p < \infty$ for equi-integrable solutions] \label{lem:finite_p_bounded_unifint}
If $\theta(\tau)$ satisfies \eqref{def:theta_equi} then we have $\norm{\theta(\tau)}_{p} \in L_\tau^\infty(\Real^+)$ for all $p < \infty$. 
\end{lemma}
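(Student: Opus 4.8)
The plan is to adapt the evolution inequality \eqref{ineq:Lpevo} from the proof of Lemma~\ref{lem:finite_p_bounded} to the present setting. In the critical case $m = 2-2/d$ one has $\overline{q} = 1$ and $1-\alpha = 0$, so the time-dependent rescaling factor disappears and \eqref{ineq:Lpevo} reads
\begin{equation*}
\frac{d}{d\tau}\norm{\theta}_p^p \le -C(p)\mathcal{I} + C(p)\norm{\theta}_{p+1}^{p+1} + C(p)\norm{\theta}_p^p, \qquad \mathcal{I} := \int \theta^{m-1}\abs{\grad\theta^{p/2}}^2\, d\eta.
\end{equation*}
The difficulty is that the homogeneous GNS bound \eqref{ineq:GNS1} now gives $\norm{\theta}_{p+1}^{p+1} \lesssim M^{a}\mathcal{I}$ with $a := \alpha_2(p+1) > 0$, and since we no longer assume $M$ small, the constant $C(p)M^{a}$ multiplying $\mathcal{I}$ need not be beaten by the dissipation coefficient $C(p)$. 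The whole point is to use the equi-integrability hypothesis \eqref{def:theta_equi} to replace the full mass $M$ by the tail mass, which can be made arbitrarily small uniformly in $\tau$.

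The key step is a truncation of the dangerous term. Fixing a level $k$, I would split $\int \theta^{p+1}\,d\eta$ over $\set{\theta \le 2k}$ and $\set{\theta > 2k}$. On the first set $\theta^{p+1} \le 2k\,\theta^p$, contributing the lower-order term $2k\norm{\theta}_p^p$. On the second set $\theta \le 2(\theta-k)_+$, so writing $w := (\theta-k)_+$ the contribution is controlled by $2^{p+1}\norm{w}_{p+1}^{p+1}$. I would then apply the GNS inequality \eqref{ineq:GNS1}, valid for any function with the same exponents, to $w$, obtaining $\norm{w}_{p+1}^{p+1} \lesssim \norm{w}_1^{a}\,\mathcal{I}(w)$, together with the pointwise monotonicity $\mathcal{I}(w) \le \mathcal{I}(\theta)$ (which holds once $p + m - 3 \ge 0$, in particular for $p\ge 2$, since $\grad w = \grad\theta$ and $0 \le w \le \theta$ on $\set{\theta>k}$). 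Because $\norm{w}_1 \le \sup_\tau\norm{(\theta-k)_+}_1 =: \epsilon(k) \to 0$ by \eqref{def:theta_equi}, choosing $k$ large makes $C(p)2^{p+1}\epsilon(k)^{a} \le \tfrac{1}{2} C(p)$, so the bad $\mathcal{I}$ term is absorbed:
\begin{equation*}
\frac{d}{d\tau}\norm{\theta}_p^p \le -\tfrac{1}{2} C(p)\mathcal{I} + C(p,k)\norm{\theta}_p^p.
\end{equation*}

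To close, I would use the lower GNS bound \eqref{ineq:gns_below}, namely $\norm{\theta}_p^p \lesssim M^{\beta_2 p}\mathcal{I}^{\beta_1 p/2}$ with $\beta_1 p/2 < 1$, which makes $\mathcal{I}$ superlinear in $X := \norm{\theta}_p^p$, i.e.\ $\mathcal{I} \gtrsim_M X^{r}$ with $r = 2/(\beta_1 p) > 1$. Young's inequality then bounds the linear term $C(p,k)X$ by $\tfrac{1}{4} C(p)\mathcal{I} + C(p,k,M)$, yielding $\tfrac{d}{d\tau}X \le -c\,X^{r} + C$. Since $r>1$, this ODE inequality keeps $X$ bounded by $\max\big(X(0),(C/c)^{1/r}\big)$ uniformly in $\tau$, which is the claim for $p \ge 2$; the remaining range $1 \le p < 2$ follows by interpolating against the conserved mass $\norm{\theta}_1 = M$.

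The main obstacle is precisely the absence of a smallness assumption on $M$: unlike in Lemma~\ref{lem:finite_p_bounded}, the GNS constant cannot be shrunk by shrinking the data, so the dissipation alone cannot dominate the nonlinear term globally. Equi-integrability is what rescues the argument, and the delicate technical points are that the GNS inequality applied to the truncation $w=(\theta-k)_+$ retains the same exponents, and that the dissipation monotonicity $\mathcal{I}((\theta-k)_+)\le\mathcal{I}(\theta)$ localizes the effective mass in the nonlinear term to the small tail $\norm{(\theta-k)_+}_1$.
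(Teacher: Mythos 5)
Your proof is correct, and it reaches the same conclusion by a route whose mechanics differ from the paper's. The paper performs the level-set truncation at the level of the evolution equation: it sets $\theta_k := (\theta-k)_+$, computes $\tfrac{d}{d\tau}\norm{\theta_k}_p^p$ directly (which produces the dissipation $\int\theta_k^{m-1}\abs{\grad\theta_k^{p/2}}^2\,d\eta$ natively, at the cost of extra terms of the form $kp\,\theta_k^{p-1}$ from the advection), applies the GNS inequality to $\theta_k$ so that the small factor $\norm{\theta_k}_1^{\alpha_2}$ appears, and recovers control of $\norm{\theta}_p$ through the slicing inequality \eqref{ineq:slicing}. You instead evolve the untruncated $\norm{\theta}_p^p$ and truncate only inside the dangerous term $\norm{\theta}_{p+1}^{p+1}$, which forces you to compare dissipations via the pointwise bound $(\theta-k)^{p+m-3}\le\theta^{p+m-3}$; that is where your restriction $p\ge 2$ enters, handled afterwards by interpolation against the conserved mass. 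Both arguments hinge on the identical mechanism — in the critical case the GNS exponent on $\mathcal{I}$ is exactly $1$, so the coefficient of $\mathcal{I}$ in the bad term is $\norm{(\theta-k)_+}_1^{2-m}$ up to constants, and \eqref{def:theta_equi} makes this uniformly small. What the paper's version buys is uniformity in $p$ without the dissipation-monotonicity step (which matters since the iteration in Lemma \ref{lem:rescaled_inftybdd} wants all finite $p$, though your interpolation fix is adequate); what your version buys is that you never have to recompute the evolution equation for the truncated function, reusing \eqref{ineq:Lpevo} verbatim. One small omission: you treat only $m=2-2/d$; this is the case the lemma is actually used for in Theorem \ref{thm:Decay}\textit{(ii)}, and for $m<2-2/d$ the prefactor $e^{(1-\alpha)\beta^{-1}\tau}\le 1$ only helps, so nothing is lost.
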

\begin{proof}
We proceed similar to the proof of Lemma \ref{lem:finite_p_bounded}, but now slightly refined to take advantage of \eqref{def:theta_equi}. Since similar arguments have appeared in several locations (for example \cite{CalvezCarrillo06,Blanchet09,BRB10}) we sketch a proof and highlight mainly the differences that appear due to the rescaling in \eqref{eq:asymptotic_pde}.  
Define $\theta_k(\tau,\eta) := (\theta(\tau,\eta) - k)_+$ and 
\begin{equation*}
\mathcal{I} = \int \theta_k^{m-1}\abs{\grad \theta_k^{p/2}}^2 dx. 
\end{equation*}
The $L^p$ norms of $\theta$ and $\theta_k$ are related through the following inequality for $1 \leq p < \infty$, 
\begin{equation}
\norm{\theta}_p^p \lesssim_p \norm{\theta_k}_p^p + k^{p-1}\norm{\theta}_1. \label{ineq:slicing}
\end{equation}
It is important to note that the implicit constant in \eqref{ineq:slicing} does not depend on $k$.  
Estimating the time evolution of $\theta_k$ as in Lemma \ref{lem:finite_p_bounded}, using Lemma \ref{lem:CZ_rescale} and \eqref{ineq:slicing} implies, 
\begin{align*}
\frac{d}{d\tau}\norm{\theta_k}_p^p & = -C(p)\mathcal{I} - \int \left((p-1)\theta_k^{p} + kp\theta^{p-1}_k \right) \grad \cdot \left(e^{d\tau}\grad \K(e^{\tau}\cdot) \ast \theta_k\right)d\eta \\ 
& \leq -C(p)\mathcal{I} + C(p)\norm{\theta_k}_{p+1}^{p+1} + C(p,k)\norm{\theta_k}_p^p + C(k,p,M). 
\end{align*}
Using the Gagliardo-Nirenberg-Sobolev inequality (Lemma \ref{lem:GNS}) implies, 
\begin{equation*}
\frac{d}{d\tau}\norm{\theta_k}_p^p \leq -\frac{C(p)}{\norm{\theta_k}^{\alpha_2}_1}\norm{\theta_k}_{p+1}^{p+1} + C(p)\norm{\theta_k}_{p+1}^{p+1} + C(p,k)\norm{\theta_k}_p^p + C(k,p,M),  
\end{equation*}
where $\alpha_2 = 1 - \alpha_1(p+m-1)/2 > 0$ and 
\begin{equation*}
\alpha_1 = \frac{2d(1 - 1/(p+1))}{2 - d + dp + d(m-1)}. 
\end{equation*}
Note $\norm{\theta_k}_p \leq M^{1/p^2}\norm{\theta_k}_{p+1}^{(p^2 -1)/p^2}$, which by weighted Young's inequality implies, 
\begin{equation*}
\frac{d}{d\tau}\norm{\theta_k}_p^p \leq -\frac{C(p)}{\norm{\theta_k}^{\alpha_2}_1}\norm{\theta_k}_{p+1}^{p+1} + C(p)\norm{\theta_k}_{p+1}^{p+1} + C(k,p,M).  
\end{equation*}
Using \eqref{def:theta_equi} we may make the leading order terms as negative as we want and interpolating $L^p$ against $L^1$ and $L^{p+1}$ again implies there is a $\delta > 0$ such that if $k$ is sufficiently large we have, 
\begin{equation*}
\frac{d}{d\tau}\norm{\theta_k}_p^p \leq -\delta\norm{\theta_k}_p^p + C(k,p,M). 
\end{equation*}
By \eqref{ineq:slicing} and conservation of mass, this concludes the proof of Lemma \ref{lem:finite_p_bounded_unifint}.
\end{proof} 

\section{Finite Length-Scales}
We begin by proving Theorem \ref{thm:Decay} for the case $\grad \K \in L^1$. 
Alikakos iteration \cite{Alikakos} is a standard method for using a result such as Lemma \ref{lem:finite_p_bounded} to imply a result of the following form.  
\begin{lemma}[Control of $L^\infty$ for small data] \label{lem:rescaled_inftybdd}
Let $\grad \K \in L^1$. Then there exists $C_{\overline{q}} = C_{\overline{q}}(M)$ and $C_M = C_M(\norm{\theta_0}_{\overline{q}})$ such that if $\norm{\theta_0}_{\overline{q}} < C_{\overline{q}}$ and $M < C_M$, then 
$\norm{\theta(\tau)}_{\infty} \in L_\tau^\infty(\Real^+)$. 
\end{lemma}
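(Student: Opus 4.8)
The plan is to upgrade the finite-$p$ control of Lemma \ref{lem:finite_p_bounded} to an $L^\infty$ bound by Alikakos/Moser iteration along a geometrically increasing sequence of exponents, keeping explicit track of how the constants depend on the exponent. Since Lemma \ref{lem:finite_p_bounded} already furnishes, under the same smallness thresholds, a uniform-in-time bound $N_p := \sup_{\tau \geq 0}\norm{\theta(\tau)}_p < \infty$ for every finite $p \geq \overline{q}$, the only remaining issue is to show that these bounds do not blow up as $p \to \infty$.

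First I would record the differential inequality for $\frac{d}{d\tau}\norm{\theta}_p^p$ exactly as at the start of the proof of Lemma \ref{lem:finite_p_bounded}: a good dissipation term $-C(p)\mathcal{I}$ with $\mathcal{I} = \int \theta^{m-1}\abs{\grad\theta^{p/2}}^2 d\eta$, the drift term $d(p-1)\norm{\theta}_p^p$ coming from $\grad\cdot(\eta\theta)$, and the nonlocal contribution. The decisive structural fact for $\grad\K \in L^1$ is twofold: the rescaled kernel $e^{d\tau}\grad\K(e^\tau\cdot)$ has $L^1$ norm equal to $\norm{\grad\K}_1$ for every $\tau$, and the time prefactor in front of the nonlocal term collapses to $e^{(1-\alpha)\beta^{-1}\tau}$ with $\alpha = d\beta \geq 1$ throughout $m \in [1,2-2/d]$. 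Hence this prefactor is $\leq 1$, and even strictly decaying when $m < 2-2/d$. Estimating the nonlocal term by H\"older and Lemma \ref{lem:CZ_rescale} exactly as in \eqref{ineq:Lpevo}, it is therefore controlled uniformly in $\tau$ by the dissipation $\mathcal{I}$ (plus lower order terms) under the same smallness of $\norm{\theta_0}_{\overline{q}}$ and $M$; the only new requirement relative to Lemma \ref{lem:finite_p_bounded} is to follow the $p$-dependence of the constants.

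Next I would run the iteration. Writing $w = \theta^{p/2}$, so that $\norm{w}_2^2 = \norm{\theta}_p^p$ and $\mathcal{I}$ controls $\norm{\grad w}_2^2$, I would first absorb the entire nonlocal contribution into a fraction of the dissipation $-C(p)\mathcal{I}$, using the prefactor bound and the smallness hypotheses as above, and then apply the homogeneous Gagliardo-Nirenberg-Sobolev inequality of Lemma \ref{lem:GNS} to interpolate the surviving production norm $\norm{\theta}_p^p$ between $\mathcal{I}$ and the lower norm $\norm{\theta}_{p/2}$. Choosing the doubling sequence $p_k = 2^k\overline{q}$ and retaining the drift term $d(p_k-1)\norm{\theta}_{p_k}^{p_k}$, one obtains after taking $\sup_\tau$ a recursion of the schematic form $N_{p_k} \leq (C\,b^{\,k})^{1/p_k}\max\!\big(N_{p_{k-1}},\,1\big)$ with constants $C,b$ independent of $k$. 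Since $\sum_k k/2^k < \infty$, the product $\prod_k (C b^k)^{1/p_k}$ converges, so $\sup_k N_{p_k} < \infty$; letting $k \to \infty$ gives $\norm{\theta(\tau)}_\infty \leq \lim_{k} N_{p_k} < \infty$ uniformly in $\tau$, which is the assertion.

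I expect the main obstacle to be verifying that the nonlocal term is absorbed into the dissipation uniformly in both $\tau$ and the exponent $p_k$ simultaneously, with constants mild enough to preserve the summability of the iteration. This is exactly where the two structural features above are essential: the $\tau$-uniform $L^1$ bound on the rescaled kernel and the prefactor $e^{(1-\alpha)\beta^{-1}\tau} \leq 1$. The critical case $m = 2-2/d$, in which this prefactor is identically $1$, is the delicate one and is precisely where the smallness of the mass $M$ must be used to keep the nonlocal term subordinate to the dissipation at every stage; in the supercritical regime $m < 2-2/d$ the strictly decaying prefactor leaves room to spare.
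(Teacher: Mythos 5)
Your overall strategy (Alikakos/Moser iteration along $p_k = 2^k\overline{q}$) is the right family of techniques, and your two structural observations --- that $\norm{e^{d\tau}\grad\K(e^{\tau}\cdot)}_1 = \norm{\grad\K}_1$ for every $\tau$, and that the prefactor $e^{(1-\alpha)\beta^{-1}\tau} \leq 1$ since $\alpha \geq 1$ for $m \in [1,2-2/d]$ --- are exactly the right ones. But the specific mechanism you propose for the nonlocal term does not close. You want to absorb it into the dissipation ``at every stage'' using the \emph{same} smallness of $\norm{\theta_0}_{\overline{q}}$ and $M$ as in Lemma \ref{lem:finite_p_bounded}. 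In that lemma the absorption condition has the form $C_1(p)\norm{\theta}_{\overline{q}}^{2-m} \leq C_2(p)$, and while the Gagliardo--Nirenberg exponent $2-m$ is independent of $p$, the coefficient $C_1(p)$ inherits a factor $(p-1)$ from H\"older together with the $L^{p+1}$ operator norm of the Calder\'on--Zygmund operator in Lemma \ref{lem:CZ_rescale}, which itself grows with $p$, whereas the dissipation coefficient does not improve. Hence the ratio $C_1(p)/C_2(p)$ grows without bound and the admissible smallness threshold degenerates as $p \rightarrow \infty$: no fixed choice of $\norm{\theta_0}_{\overline{q}}$ and $M$ keeps the nonlocal term subordinate to the dissipation along the whole sequence $p_k \rightarrow \infty$. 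This is a genuine gap, not bookkeeping --- if the required smallness shrinks with $k$, the recursion cannot be started for large $k$.

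The missing idea --- and the paper's actual route --- is to decouple the nonlocal term from the iteration entirely by first proving that the velocity field $\vec{v} := e^{(1-\alpha-\beta)\beta^{-1}\tau}\, e^{d\tau}\grad\K(e^{\tau}\cdot)\ast\theta$ lies in $L^\infty_{\tau,\eta}(\Real^+\times\Real^d)$. This requires only one finite exponent $p > d$: Lemma \ref{lem:finite_p_bounded} gives $\norm{\theta(\tau)}_p \lesssim 1$, Lemma \ref{lem:CZ_rescale} gives $\norm{\grad\vec{v}}_p \lesssim e^{(1-\alpha)\beta^{-1}\tau} \lesssim 1$, Young's inequality with $\grad\K \in L^1$ gives $\norm{\vec{v}}_p \lesssim e^{(1-\alpha-\beta)\beta^{-1}\tau} \lesssim 1$, and Morrey's embedding concludes. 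Once $\vec{v} \in L^\infty$, the advection term in the $p$-energy identity is controlled by Cauchy--Schwarz against the dissipation at a cost that is only polynomial in $p$ and requires \emph{no further smallness}; this is precisely what preserves the summability of $\prod_k(Cb^k)^{1/p_k}$ in your scheme. The paper then invokes the iteration lemma of Kowalczyk / Calvez--Carrillo in this form rather than rederiving it. If you insert this intermediate $L^\infty$ bound on $\vec{v}$ before launching your recursion, your argument goes through.
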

\begin{proof}
Standard iteration implies $\norm{\theta(\tau)}_\infty \in L_\tau^\infty(\Real^+)$, provided 
\begin{equation*}
\vec{v} := e^{(1 - \alpha - \beta)\beta^{-1}\tau}\conv{e^{d\tau}\grad \K(e^{\tau} \cdot)}{\theta} \in L^\infty_{\tau,\eta}(\Real^+\times \Real^d). 
\end{equation*}
See \cite{JagerLuckhaus92,CalvezCarrillo06,BRB10,Kowalczyk05,SugiyamaDIE06,SugiyamaADE07}. For instance, an iteration lemma due to Kowalczyk \cite{Kowalczyk05} may be extended easily 
to the case $\Real^d$, $d \geq 2$ and to include the $\grad \cdot (\eta \theta)$ term in \eqref{eq:asymptotic_pde} \cite{CalvezCarrillo06}.

Fix $p > d$. Then by Lemma \ref{lem:finite_p_bounded}, for sufficiently small $M$ and $\norm{\theta_0}_{\overline{q}}$, $\norm{\theta(\tau)}_p \in L_\tau^\infty(\Real^+)$. 
Therefore by Lemma \ref{lem:CZ_rescale} in the appendix,  
\begin{equation*}
\norm{\grad \vec{v}}_p = \norm{e^{(1-\alpha-\beta)\beta^{-1}\tau}\grad\left(e^{d\tau}\grad\K(e^{\tau} \cdot)\ast\theta\right)}_p \lesssim e^{(1-\alpha)\beta^{-1}\tau}\norm{\theta}_p \lesssim e^{(1-\alpha)\beta^{-1}\tau}.  
\end{equation*}
Moreover, by $\grad \K \in L^1(\Real^d)$, 
\begin{align*}
\norm{\vec{v}}_p & \leq e^{(1 - \alpha - \beta)\beta^{-1}\tau}\norm{\theta}_p \lesssim e^{(1 - \alpha - \beta)\beta^{-1}\tau}. 
\end{align*}
Since $1 - \alpha \leq 0$, Morrey's inequality implies $\vec{v} \in L^\infty_{\tau,\eta}(\Real^+\times\Real^d)$ and the lemma follows. 
\end{proof}

By Lemma \ref{lem:rescaled_inftybdd} and the definition of $\tau$, 
\begin{equation*}
\norm{u(t)}_{L_x^\infty(\Real^d)} = e^{-d\tau}\norm{\theta}_{L_\eta^\infty(\Real^d)} \lesssim (1+t)^{-d\beta},
\end{equation*}
establishing \eqref{ineq:LinftyDecay}. 
A similar argument using Lemma \ref{lem:finite_p_bounded_unifint} in place of Lemma \ref{lem:finite_p_bounded} implies
\begin{lemma}
Theorem \ref{thm:Decay} holds if $\grad \K \in L^1$.
\end{lemma}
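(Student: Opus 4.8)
The statement bundles both parts of Theorem \ref{thm:Decay} in the regime $\grad\K \in L^1$, and part \textit{(i)} is in effect already in hand: Lemma \ref{lem:rescaled_inftybdd} gives $\norm{\theta(\tau)}_\infty \in L^\infty_\tau(\Real^+)$ under the smallness of $\norm{\theta_0}_{\overline{q}}$ and $M$, whereupon undoing the change of variables \eqref{def:theta} produces $\norm{u(t)}_\infty = e^{-d\tau}\norm{\theta(\tau)}_\infty \lesssim (1+t)^{-d\beta}$, which is \eqref{ineq:LinftyDecay}; global existence then follows since this uniform-in-time bound rules out finite-time blow-up in the local theory. So the only genuinely new content is part \textit{(ii)}, the critical case $m = 2-2/d$ under the equi-integrability hypothesis \eqref{def:EquiInTheta}.

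The plan for \textit{(ii)} is to rerun the proof of Lemma \ref{lem:rescaled_inftybdd} almost verbatim, substituting the equi-integrable $L^p$ control of Lemma \ref{lem:finite_p_bounded_unifint} for the small-data control of Lemma \ref{lem:finite_p_bounded}. First I would note that \eqref{def:EquiInTheta} is equivalent to the rescaled equi-integrability \eqref{def:theta_equi}, so that Lemma \ref{lem:finite_p_bounded_unifint} applies and yields $\norm{\theta(\tau)}_p \in L^\infty_\tau(\Real^+)$ for every finite $p$, with no smallness assumption. Fixing $p > d$, the Alikakos iteration reduces the $L^\infty$ bound to showing that the rescaled drift $\vec{v} = e^{(1-\alpha-\beta)\beta^{-1}\tau}(e^{d\tau}\grad\K(e^{\tau}\cdot)\ast\theta)$ belongs to $L^\infty_{\tau,\eta}(\Real^+\times\Real^d)$.

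The control of $\vec{v}$ is where the critical exponent is doing the real work, and this is the only point needing care. When $m = 2-2/d$ one has $\beta = 1/d$ and $\alpha = d\beta = 1$, so the growth factor $e^{(1-\alpha)\beta^{-1}\tau}$ in the rescaled Calder\'on-Zygmund estimate Lemma \ref{lem:CZ_rescale} collapses to $1$; thus $\norm{\grad\vec{v}}_p \lesssim e^{(1-\alpha)\beta^{-1}\tau}\norm{\theta}_p = \norm{\theta}_p$ is uniformly bounded. Similarly, since $1-\alpha-\beta = -1/d < 0$, Young's inequality together with $\norm{e^{d\tau}\grad\K(e^{\tau}\cdot)}_1 = \norm{\grad\K}_1$ gives $\norm{\vec{v}}_p \leq e^{-\tau}\norm{\grad\K}_1\norm{\theta}_p$, which is uniformly bounded (in fact decaying). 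Morrey's inequality for $p > d$ then delivers $\vec{v} \in L^\infty_{\tau,\eta}$, the iteration closes to give $\norm{\theta(\tau)}_\infty \in L^\infty_\tau(\Real^+)$, and the change of variables again yields \eqref{ineq:LinftyDecay}.

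I do not anticipate a serious obstacle: the substantive estimate --- extracting uniform $L^p$ bounds from equi-integrability --- is already carried out in Lemma \ref{lem:finite_p_bounded_unifint}, and what remains is the routine iteration packaging. The one thing to verify carefully is the exponential bookkeeping in the critical case, where the identity $\alpha = 1$ is precisely what strips the $\tau$-growth out of $\norm{\grad\vec{v}}_p$ and lets the drift be bounded without any smallness of the data.
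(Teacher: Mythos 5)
Your proposal is correct and follows essentially the same route as the paper, which disposes of part \textit{(i)} via Lemma \ref{lem:rescaled_inftybdd} and the change of variables, and proves part \textit{(ii)} by rerunning that iteration argument with Lemma \ref{lem:finite_p_bounded_unifint} substituted for Lemma \ref{lem:finite_p_bounded}. Your explicit check that $\alpha = 1$ and $1-\alpha-\beta = -1/d < 0$ in the critical case, so the drift $\vec{v}$ is bounded without any smallness of the data, is exactly the bookkeeping the paper leaves implicit.
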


Now we turn to Theorem \ref{thm:IA}.
\begin{proof}(Theorem \ref{thm:IA}: \textbf{Intermediate Asymptotics I})
Now that the requisite decay estimate has been established, we proceed by estimating the decay of the relative entropy \eqref{def:relativeH}. 
By Young's inequality, $\grad \K \in L^1(\Real^d)$ and \ref{ineq:LinftyDecay}, 
\begin{equation}
\norm{e^{d\tau}\grad\K(e^{\tau}\cdot)\ast\theta}_\infty \leq \norm{\grad \K}_1\norm{\theta}_\infty \lesssim 1.  \label{ineq:velocity_bounded} 
\end{equation}

We first settle the case $m > 1$. 
By a standard computation, \eqref{ineq:velocity_bounded} and Cauchy-Schwarz, for all $\delta > 0$, 
\begin{align*} 
\frac{d}{d\tau}H(\theta(\tau)|\theta_M) & = -I(\theta) +  e^{(1-\alpha-\beta)\beta^{-1}\tau}\int \grad\left( \frac{1}{m-1}\theta^{m} + \frac{1}{2}\abs{\eta}^2\right)\cdot \theta e^{d\tau}\grad\K(e^{\tau}\cdot)\ast\theta d\eta \\
& \leq -I(\theta) + e^{(1-\alpha-\beta)\beta^{-1}\tau}I(\theta)^{1/2} \left(\int \theta \abs{e^{d\tau}\grad\K(e^{\tau}\cdot)\ast\theta}^2 d\eta \right)^{1/2} \\
& \leq -(1-e^{-2\delta\tau})I(\theta) + Ce^{(2-2\alpha-2\beta)\beta^{-1}\tau + 2\delta\tau}. 
\end{align*} 
Let $\gamma(\delta) := (2\alpha+2\beta - 2)\beta^{-1} - 2\delta > 0$. By the generalized Gross Logarithmic Sobolev inequality, Theorem \ref{thm:rel_entropy}, we therefore have, 
\begin{equation}
\frac{d}{d\tau}H(\theta(\tau)|\theta_M) \leq -2(1 - e^{-2\delta\tau})H(\theta|\theta_M) + Ce^{-\gamma \tau}. \label{ineq:Htheta_RoC}
\end{equation}
Solving the differential inequality \eqref{ineq:Htheta_RoC} implies, 
\begin{equation*}
H(\theta(\tau)|\theta_M) \lesssim e^{-\tau\min\left(2,\gamma(\delta)\right)}. 
\end{equation*}
Now by the generalized Csiszar-Kullback inequality, Theorem \ref{thm:CK},
\begin{equation*}
\norm{\theta(\tau) - \theta_M}_1 \lesssim e^{-\frac{\tau}{2}\min\left(2,\gamma(\delta)\right)}.
\end{equation*}
Re-writing in terms of $x$ and $t$ and using \eqref{eq:BarenblattToGS}, 
\begin{equation*}
\norm{u(t) - \mathcal{U}(t;M)}_1 \lesssim (1+t)^{-\frac{\beta}{2}\tau\min\left(2,\gamma(\delta)\right)}.
\end{equation*}
If $m < 2-2/d$, it can be verified that $\delta>0$ may always be chosen small enough such that $2 < \gamma(\delta)$. 
If instead $m = 2-2/d$, then $2d + 2 -2\beta^{-1} = 2$. This establishes \eqref{ineq:conv_IA} in the case $p = 1$. 
Interpolation against \eqref{ineq:LinftyDecay} completes the proof. 

We now settle the case $m = 1$. 
The time evolution of the relative entropy is similar to above. By \eqref{ineq:velocity_bounded} and Cauchy-Schwarz, for all $\delta > 0$, 
\begin{align*} 
\frac{d}{d\tau}H(\theta(\tau)|\theta_M) & = -I(\theta) +  e^{(1-\alpha-\beta)\beta^{-1}\tau}\int \grad\left( \log \theta + \frac{1}{2}\abs{\eta}^2\right)\cdot \theta e^{d\tau}\grad\K(e^{\tau}\cdot)\ast\theta d\eta \\
& \leq -I(\theta) + e^{(1-\alpha-\beta)\beta^{-1}\tau}I(\theta)^{1/2} \left(\int \theta \abs{e^{d\tau}\grad\K(e^{\tau}\cdot)\ast\theta}^2 d\eta \right)^{1/2} \\
& \leq (1-e^{-\delta\tau})I(\theta) + Ce^{(2-2\alpha-2\beta)\beta^{-1}\tau + \delta\tau}. 
\end{align*} 
The rest of the proof follows similarly to the case $m > 1$ using Theorems \ref{thm:rel_entropy} and \ref{thm:CK}. 
This concludes the proof of Theorem \ref{thm:IA}. 
\end{proof}

\begin{remark}
A generalization of Talagrand's inequality \cite{CarrilloMcCannVillani03} shows that $\theta \rightarrow \theta_M$ also in the Euclidean Wasserstein distance.
\end{remark}

\section{Infinite Length-Scales}

We now turn to the proofs of Theorem \ref{thm:Decay} and Theorem \ref{thm:IA2} in the case $\grad \K \not\in L^1$. 
In order to properly extend the work of the previous section, we must estimate the quantities $\norm{e^{d\tau}\grad \K(e^{\tau}\cdot)\ast\theta}_p$ appearing in \eqref{ineq:velocity_bounded} and the proof of Lemma \ref{lem:rescaled_inftybdd}. 
However, $\grad \K \not\in L^1(\Real^d)$ and Young's inequality is not sufficient; in fact we will not bound $\norm{e^{d\tau}\grad \K(e^{\tau}\cdot)\ast\theta}_p$ uniformly in time but instead bound the rate at which it grows.
We separately estimate the growth of the quantities 
$\norm{\lambda^d\grad\K(\lambda \cdot)\mathbf{1}_{B_1(0)}}_{1}$ and 
$\norm{\lambda^d\grad\K(\lambda \cdot)\mathbf{1}_{\Real^d\setminus B_1(0)}}_{p}$ as $\lambda \rightarrow \infty$. 
Using $\abs{\grad \K(x)} \lesssim \abs{x}^{-\gamma}$ for sufficiently large $\abs{x}$, if $\gamma < d$, then for large $\lambda$,    
\begin{align}
\int \lambda^d\abs{\grad \K(\lambda y)}\mathbf{1}_{B_1(0)}(\abs{y}) dy & = \int_{\abs{y} \leq \lambda} \abs{\grad \K(y)} dy \nonumber \\
& = \int_{S^{d-1}}\int_0^{\lambda} \abs{\grad \K(\rho\omega)}r\rho^{d-1} d\rho d\omega \nonumber \\ 
& \lesssim 1 + \lambda^{d-\gamma}. \label{ineq:KL1loc}
\end{align}
Similarly, if $\gamma = d$, then for large $\lambda$, 
\begin{equation}
\int \lambda^d\abs{\grad \K(\lambda y)}\mathbf{1}_{B_1(0)}(\abs{y}) dy \lesssim 1 + \log \lambda. \label{ineq:KL1loc_log}
\end{equation}
If $d/(d-1) < q < \infty$, since $\gamma \geq d-1$, for $\lambda$ sufficiently large we have, 
\begin{align}
\int \lambda^{qd}\abs{\grad \K(\lambda y)}^q\mathbf{1}_{\Real^d \setminus B_1(0)}(\abs{y}) dy & = \int_{\abs{y} \geq \lambda} \lambda^{qd - d} \abs{\grad \K(y)}^q dy \nonumber \\
& = \lambda^{qd - d} \int_{S^{d-1}}\int_\lambda^{\infty} \abs{\grad \K(\rho\omega)}^q\rho^{d-1} d\rho d\omega \nonumber \\
& \lesssim \lambda^{q(d-\gamma)}. \label{ineq:K_Lq_decay}
\end{align}
Similarly,
\begin{equation}
\sup_{\abs{x} \geq 1}\abs{\lambda^d\grad \K(\lambda x)} \lesssim 1 + \lambda^{d-\gamma}. \label{ineq:K_Linfty_decay}
\end{equation}

We may now complete the general proof of Theorem \ref{thm:Decay}.
\begin{proof}(\textbf{Theorem} \ref{thm:Decay})
We first complete the proof of \textit{(i)}.
Lemma \ref{lem:rescaled_inftybdd} extends to the case $\grad \K \not\in L^1$ provided we can bound $\vec{v}  := e^{(1-\alpha-\beta)\beta^{-1}\tau} e^{d\tau}\grad \K(e^{\tau} \cdot) \ast \theta$ in $L^\infty_{\eta}(\Real^d)$ uniformly in time.
Indeed, fix $p > d$. Then for $M$ and $\norm{\theta_0}_{\overline{q}}$ sufficiently small, we have by Lemma \ref{lem:finite_p_bounded},
$\norm{\theta(\tau)}_p \in L^\infty_\tau(\Real^+)$. By Lemma \ref{lem:CZ_rescale}, 
\begin{equation*}
\norm{\grad \vec{v}}_p \lesssim e^{(1-\alpha)\beta^{-1}\tau}\norm{\theta}_p \lesssim e^{(1-\alpha)\beta^{-1}\tau}.
\end{equation*} 
Let $q$ be such that $d/(d-1) < q \leq p$, which implies $\norm{\theta(\tau)}_q \lesssim 1$. 
If $\gamma < d$ then by Young's inequality, 
\begin{align*}
\norm{\vec{v}}_{q} & \leq e^{(1-\alpha-\beta)\beta^{-1}\tau}\left( \norm{ e^{d\tau}\grad \K(e^{\tau}\cdot)\mathbf{1}_{B_1(0)} \ast \theta}_q + \norm{ e^{d\tau}\grad \K(e^{\tau}\cdot)\mathbf{1}_{\Real^d \setminus B_1(0)} \ast \theta}_q \right) \\
& \leq  e^{(1-\alpha-\beta)\beta^{-1}\tau}\left( \norm{e^{d\tau}\grad \K(e^{\tau}\cdot)\mathbf{1}_{B_1(0)}}_1\norm{\theta}_q + \norm{e^{d\tau}\grad \K(e^{\tau}\cdot)\mathbf{1}_{\Real^d \setminus B_1(0)}}_qM\right). 
\end{align*}
Since $\norm{\theta(\tau)}_q \lesssim 1$, by \eqref{ineq:KL1loc} and \eqref{ineq:K_Lq_decay} we have, 
\begin{align*}
\norm{\vec{v}}_q & \lesssim e^{(1-\alpha-\beta)\beta^{-1}\tau}\left(1 + e^{(d-\gamma)\tau} \right) \\
& \lesssim e^{(1-\alpha - \beta)\beta^{-1}\tau} + e^{(1-\beta - \gamma\beta)\beta^{-1}\tau}.  
\end{align*}
Since $1 - \beta - \gamma\beta \leq 0$ and $1 - \alpha \leq 0$, by Morrey's inequality we may conclude $\vec{v} \in L^\infty_{\tau,\eta}(\Real^+ \times \Real^d)$. 
Similarly if $\gamma = d$, then by the same reasoning as above, \eqref{ineq:KL1loc_log} and \eqref{ineq:K_Lq_decay} imply,
\begin{align*}
\norm{\vec{v}}_q & \lesssim e^{(1-\alpha - \beta)\beta^{-1}\tau}\left(1 + \tau + e^{(d-\gamma)\tau} \right) \\
& \lesssim e^{(1-\alpha - \beta)\beta^{-1}\tau}\left(1 + \tau\right) + e^{(1-\beta - \gamma\beta)\beta^{-1}\tau}.  
\end{align*}
Since $1 - \alpha - \beta < 0$, we may conclude also in this case that $\vec{v} \in L^\infty_{\tau,\eta}(\Real^+ \times \Real^d)$.  
Therefore Lemma \ref{lem:rescaled_inftybdd} applies with the hypotheses of Theorem \ref{thm:IA2}. Re-writing in terms of $x$ and $t$, this implies \eqref{ineq:LinftyDecay}. A similar proof with Lemma \ref{lem:finite_p_bounded_unifint} in place of Lemma \ref{lem:finite_p_bounded} also proves \textit{(ii)}
\end{proof}

We now prove Theorem \ref{thm:IA2}. 

\begin{proof}(Theorem \ref{thm:IA2}: \textbf{Intermediate Asymptotics II})
To complete the proof of Theorem \ref{thm:IA2}, we estimate the decay of the relative entropy \eqref{def:relativeH}. 
The proof of Theorem \ref{thm:IA} used the estimate \eqref{ineq:velocity_bounded}. Here we use the bound $\norm{\theta(\tau)}_\infty \lesssim 1$ \eqref{ineq:K_Linfty_decay} and \eqref{ineq:KL1loc} to imply, if $\gamma < d$, 
\begin{align}
\norm{e^{d\tau}\grad \K(e^{\tau}\cdot)\ast \theta}_\infty & \leq \left( \norm{ e^{d\tau}\grad \K(e^{\tau}\cdot)\mathbf{1}_{B_1(0)} \ast \theta}_\infty + \norm{ e^{d\tau}\grad \K(e^{\tau}\cdot)\mathbf{1}_{\Real^d \setminus B_1(0)} \ast \theta}_\infty \right) \nonumber \\ 
&  \lesssim \left(1 + e^{(d-\gamma)\tau}\right)\left(\norm{\theta}_\infty  + M\right) \\ & \lesssim 1 + e^{(d-\gamma)\tau} \lesssim e^{(d-\gamma)\tau}. \label{ineq:velocity_bounded_IA2}   
\end{align}
Similarly, if $\gamma = d$ then, for all $\delta > 0$, 
\begin{equation*} 
\norm{e^{d\tau}\grad \K(e^{\tau}\cdot)\ast \theta}_\infty \lesssim 1 + \tau \lesssim_\delta e^{\delta \tau}.
\end{equation*}
The growth of \eqref{ineq:velocity_bounded_IA2} in time is the source of the degraded convergence rate observed in \eqref{ineq:conv_IA2}. As noted above, this is a manifestation of slow decay in the kernel, which causes growth of $e^{d\tau}\grad \K(e^{\tau}\cdot)$ in $L^1_{loc}$. 
Indeed, computing the decay of the relative entropy (with linear or nonlinear diffusion) as above with \eqref{ineq:velocity_bounded_IA2},
\begin{align*}
\frac{d}{d\tau}H(\theta(\tau)|\theta_M) & = \leq -I(\theta) + e^{(1-\alpha-\beta)\beta^{-1}\tau}I(\theta)^{1/2} \left(\int \theta \abs{e^{d\tau}\grad\K(e^{\tau}\cdot)\ast\theta}^2 d\eta \right)^{1/2} \nonumber \\
& \leq (1-e^{-2\delta\tau})I(\theta) + Ce^{ (2(1-\alpha-\beta)\beta^{-1} + 2(d-\gamma) + 2\delta)\tau}. 
\end{align*}
As before, Theorems \ref{thm:rel_entropy} and \ref{thm:CK} imply, 
\begin{equation*}
\norm{\theta(\tau) - \theta_M}_1 \lesssim e^{-\tau\min\left(1,1 + \gamma - \beta^{-1} - \delta\right)}.
\end{equation*}
Re-writing in terms of $x$ and $t$ and interpolating against \eqref{ineq:LinftyDecay} completes the proof. The corresponding argument follows also for $\gamma = d$, absorbing the mild growth of $\norm{e^{d\tau}\K(e^{\tau}\cdot)\ast\theta}_\infty$ into the $\delta$ already introduced.   
\end{proof}

\section{Acknowledgments} 
The author would like to thank Andrea Bertozzi, Thomas Laurent and Nancy Rodr\'iguez for helpful discussions and guidance, 
and to Inwon Kim for helpful discussions as well suggesting the problem.
This work was in part supported by NSF grant DMS-0907931.

\section{Appendix}
\begin{lemma}[Homogeneous Gagliardo-Nirenberg-Sobolev] \label{lem:GNS}
Let $d \geq 2$ and $f:\Real^d \rightarrow \Real$ satisfy $f \in L^p\cap L^q$ and $\grad f^k \in L^r$. Moreover let $1 \leq p \leq rk \leq dk$, $k < q < rkd/(d-r)$ and
\begin{equation}
\frac{1}{r} - \frac{k}{q} - \frac{s}{d} < 0. \label{cond:GNS}
\end{equation}
Then there exists a constant $C_{GNS}$ which depends on $s,p,q,r,d$ such that
\begin{equation}
\norm{f}_{L^q} \leq C_{GNS}\norm{f}^{\alpha_2}_{L^p} \norm{f^k}^{\alpha_1}_{\dot{W}^{s,r}}, \label{eq:GNS}
\end{equation}
where $0 < \alpha_i$ satisfy
\begin{equation}
1 = \alpha_1 k + \alpha_2,
\end{equation}
and
\begin{equation}
\frac{1}{q} - \frac{1}{p} = \alpha_1(\frac{-s}{d} + \frac{1}{r} - \frac{k}{p}).
\end{equation}
\end{lemma}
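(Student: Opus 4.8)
The plan is to recognize that the two defining relations for $\alpha_1,\alpha_2$ are precisely the homogeneity constraint (under $f \mapsto cf$) and the dilation-invariance constraint (under $f \mapsto f(\lambda\,\cdot)$) for \eqref{eq:GNS}, so that the genuine analytic content reduces to a single Sobolev embedding combined with Lebesgue interpolation. First I would substitute $g := f^k$, so that $\norm{f^k}_{\dot{W}^{s,r}} = \norm{g}_{\dot{W}^{s,r}}$ and $\norm{g}_{L^\rho} = \norm{f}_{L^{k\rho}}^k$ for any exponent $\rho$. This converts the two-sided control of a nonlinear power of $f$ into a statement purely about $g$.

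Next I would apply the homogeneous fractional Sobolev embedding $\dot{W}^{s,r}(\Real^d) \hookrightarrow L^{r^*}(\Real^d)$, where $\tfrac{1}{r^*} = \tfrac{1}{r} - \tfrac{s}{d}$, to the function $g$. This requires $sr < d$; in the relevant case $s = 1$ this is exactly the condition $r < d$, which is forced by the hypothesis $q < rkd/(d-r)$ (the right-hand side being finite and positive precisely when $r<d$). The embedding yields $\norm{f}_{L^{kr^*}}^k = \norm{g}_{L^{r^*}} \lesssim \norm{g}_{\dot{W}^{s,r}} = \norm{f^k}_{\dot{W}^{s,r}}$, so that $f \in L^{kr^*}$ with $\norm{f}_{L^{kr^*}} \lesssim \norm{f^k}_{\dot{W}^{s,r}}^{1/k}$.

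I would then interpolate the target $L^q$ norm between $L^p$ and $L^{kr^*}$: writing $\tfrac{1}{q} = \tfrac{1-\sigma}{p} + \tfrac{\sigma}{kr^*}$, H\"older's inequality gives $\norm{f}_{L^q} \le \norm{f}_{L^p}^{1-\sigma}\norm{f}_{L^{kr^*}}^{\sigma}$. The hypothesis \eqref{cond:GNS} rearranges to $q < kr^*$; in the regime where both $\alpha_i$ are positive one also has $p < q$, so $\sigma \in (0,1)$ and the interpolation is legitimate. Combining with the previous step and setting $\alpha_2 = 1 - \sigma$ and $\alpha_1 = \sigma/k$ yields \eqref{eq:GNS}. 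It then remains only to verify the two algebraic relations: $\alpha_1 k + \alpha_2 = \sigma + (1-\sigma) = 1$, and, substituting $\tfrac{1}{kr^*} = \tfrac{1}{k}\bigl(\tfrac{1}{r} - \tfrac{s}{d}\bigr)$ into the interpolation identity, $\tfrac{1}{q} - \tfrac{1}{p} = \sigma\bigl(\tfrac{1}{kr^*} - \tfrac{1}{p}\bigr) = \alpha_1\bigl(\tfrac{-s}{d} + \tfrac{1}{r} - \tfrac{k}{p}\bigr)$, exactly as claimed; equivalently, both relations can be read off directly from the scaling bookkeeping.

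The main obstacle is the single analytic input, the homogeneous Sobolev embedding $\dot{W}^{s,r} \hookrightarrow L^{r^*}$ itself: for $s = 1$ this is the classical Gagliardo--Nirenberg--Sobolev inequality, valid for $1 \le r < d$, whereas for fractional $0 < s < 1$ one must instead invoke a Riesz-potential / Hardy--Littlewood--Sobolev estimate, which is delicate at the endpoint $r = 1$ and requires fixing the precise meaning of the homogeneous seminorm $\norm{\cdot}_{\dot{W}^{s,r}}$. Once that embedding is granted, the remaining ingredients --- the substitution, the Lebesgue interpolation, and the determination of $\alpha_1,\alpha_2$ --- are routine and entirely dictated by scaling, so I expect no further difficulty.
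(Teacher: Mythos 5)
The paper offers no proof of this lemma to compare against: it is stated in the appendix as a known inequality (the standard power-type Gagliardo--Nirenberg interpolation estimate) and is simply invoked in the proofs of Lemmas \ref{lem:finite_p_bounded} and \ref{lem:finite_p_bounded_unifint}. Your derivation --- substitute $g=f^k$, apply the homogeneous Sobolev embedding $\dot{W}^{s,r}\hookrightarrow L^{r^*}$ with $1/r^*=1/r-s/d$, then interpolate $L^q$ between $L^p$ and $L^{kr^*}$ --- is the standard proof, and your bookkeeping is correct: condition \eqref{cond:GNS} is exactly $q<kr^*$, positivity of the $\alpha_i$ encodes $p<q<kr^*$ so the H\"older interpolation is legitimate, and both algebraic relations for $\alpha_1,\alpha_2$ follow as you compute.

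One caveat is worth recording. Your single analytic input, the embedding $\dot{W}^{1,r}\hookrightarrow L^{r^*}$, requires $r<d$, and you argue this is forced by the hypothesis $q<rkd/(d-r)$. That is true of the lemma as literally stated, so you have proved the statement; but the paper actually applies the lemma with $r=2$ in all dimensions $d\geq 2$, and in the borderline case $d=2$ one has $r=d$, $r^*=\infty$, and the embedding $\dot{W}^{1,2}(\Real^2)\hookrightarrow L^\infty$ fails. The interpolation inequality itself survives there (it is the Ladyzhenskaya-type estimate, e.g. $\norm{g}_{4}^2\lesssim\norm{g}_{2}\norm{\grad g}_{2}$ in $\Real^2$), but it cannot be obtained by your two-step route; one instead applies $\dot{W}^{1,1}\hookrightarrow L^{d/(d-1)}$ to a suitable power $g^t$ and uses H\"older on $\grad(g^t)=tg^{t-1}\grad g$, or runs the original Nirenberg slicing argument. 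So to support every use the paper makes of this lemma you would need to add that endpoint variant. Your remark about fractional $s$ and the failure of the Riesz-potential estimate at $r=1$ is correct but moot here, since the paper only ever uses $s=1$, $r=2$.
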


The following lemma verifies that the distributions defined by the second derivatives 
of admissible kernels behave as expected under mass-invariant scalings. 
\begin{lemma} \label{lem:CZ_rescale}
Let $\K$ be admissible. Then $\forall \, p, \; 1 < p < \infty$, $u \in L^p$ and $t > 0$, we have
\begin{equation}
\norm{\grad\left(t^d\grad\K(t \cdot)\ast u \right)}_p \lesssim_p t \norm{u}_p. \label{ineq:CZ_rescale}
\end{equation}
\end{lemma}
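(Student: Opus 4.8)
The plan is to reduce the rescaled estimate to the ordinary Calderón--Zygmund bound for the unscaled operator $Tu := D^2\K \ast u$ by a dilation argument, so that the single power of $t$ on the right-hand side emerges as a pure scaling artifact. First I would commute the outer gradient into the convolution. Since $\grad\K \in L^1_{loc}$ is a genuine (locally integrable) function, differentiating transfers the derivative onto the kernel and yields
\begin{equation*}
\grad\left(t^d\grad\K(t\cdot)\ast u\right) = \left(t^{d+1}(D^2\K)(t\cdot)\right)\ast u,
\end{equation*}
where $D^2\K$ is understood as the associated principal-value singular kernel, the singularity at the origin contributing the usual bounded correction term (for the Newtonian potential this is the familiar multiple of $\delta_0$ in the trace). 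Thus it suffices to estimate $\norm{(t^{d+1}(D^2\K)(t\cdot))\ast u}_p$.

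The essential input is that $T$ is bounded on $L^p$ for every $1 < p < \infty$, and this is exactly where admissibility enters. The condition \textbf{(BD)}, $\abs{D^3\K(x)} \lesssim \abs{x}^{-d-1}$, together with the radial structure \textbf{(KN)} and the monotonicity \textbf{(MN)} of $k''$ and $k'/r$ near the origin, supplies the size bound $\abs{D^2\K(x)} \lesssim \abs{x}^{-d}$ and the Hörmander smoothness condition, while the remaining $L^2$/cancellation estimate identifies $T$ as a bounded singular integral operator. Since this statement is precisely the one recorded for admissible kernels in \cite{BRB10}, I would cite it rather than reprove the full Calderón--Zygmund argument, noting only that $\grad \K \in \wkspace{\frac{d}{d-1}}$ marks the borderline (Newtonian) strength of the singularity.

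The remainder is bookkeeping with dilations. Writing $(D_t f)(x) := f(tx)$, so that $\norm{D_t f}_p = t^{-d/p}\norm{f}_p$, a change of variables in the convolution gives the identity
\begin{equation*}
\left(t^{d+1}(D^2\K)(t\cdot)\right)\ast u = t\, D_t\!\left(T\left(D_{1/t}u\right)\right).
\end{equation*}
Taking $L^p$ norms and invoking the boundedness of $T$ yields
\begin{equation*}
\norm{\grad\left(t^d\grad\K(t\cdot)\ast u\right)}_p \leq t\cdot t^{-d/p}\,\norm{T}_{p\to p}\,\norm{D_{1/t}u}_p = t\cdot t^{-d/p}\,\norm{T}_{p\to p}\,t^{d/p}\norm{u}_p,
\end{equation*}
and the opposing powers $t^{-d/p}$ and $t^{d/p}$ cancel, leaving exactly $\norm{T}_{p\to p}\,t\,\norm{u}_p$, which is \eqref{ineq:CZ_rescale}.

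The main obstacle is the second step, the $L^p$ boundedness of $T$: if one prefers to derive it from the admissibility axioms rather than quote \cite{BRB10}, the delicate point is not the size or Hörmander conditions (which follow directly from \textbf{(BD)} and \textbf{(MN)}) but the cancellation/$L^2$ bound, since $D^2\K$ must be given a principal-value distributional meaning and its Fourier multiplier shown to be bounded. Once $T$ is known to be bounded, the scaling identity makes the $t$-dependence automatic, and it is precisely the mass-invariant normalization $t^d\grad\K(t\cdot)$ that forces the single power of $t$.
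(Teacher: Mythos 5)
Your proof is correct and follows essentially the same route as the paper: distributional differentiation producing a delta-type correction at the origin plus a principal-value kernel, Calder\'on--Zygmund boundedness of the latter from the admissibility conditions (as in \cite{BRB10}), and the single factor of $t$ from the chain rule. The only cosmetic difference is that you obtain uniformity in $t$ by conjugating with dilations, whereas the paper observes directly that the size and H\"ormander conditions controlling the operator norm are invariant under the mass-preserving rescaling $t^d\K(t\cdot)$; the two arguments are equivalent.
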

\begin{proof}
We take the second derivative in the sense of distributions.
Let $\phi \in C_c^\infty$, then by the dominated convergence theorem,
\begin{align*}
\int t^d\partial_{x_i}\K(tx) \partial_{x_j} \phi(x) dx & = \lim_{\epsilon \rightarrow 0} \int_{\abs{x} \geq \epsilon}t^d \partial_{x_i}\K(tx) \partial_{x_j}\phi(x) dx \\
& = -t\lim_{\epsilon \rightarrow 0}\int_{\abs{x} = \epsilon} t^{d-1}\partial_{x_y}\K(tx)\frac{x_j}{\abs{x}}\phi(x) dS - t\textup{PV} \int t^{d}\partial_{x_i,x_j}\K(tx) \phi(x) dx.  
\end{align*}
By $\grad \K \in \wkspace{d/(d-1)}$, we have $\grad \K = \mathcal{O}(\abs{x}^{1-d})$ as $x \rightarrow 0$. Therefore for $\epsilon$ sufficiently small, there exists $C > 0$ such that,
\begin{align*}
\abs{t\int_{\abs{x} = \epsilon}t^{d-1}\partial_{x_i}\K(tx)\frac{x_j}{\abs{x}}\phi(x) dS} & \leq Ct{\int_{\abs{x} = \epsilon} \abs{x}^{1-d}\abs{\phi(x)}dS} \\
& = Ct\int_{\abs{x}= 1}\abs{\epsilon x}^{1-d}\abs{\phi(\epsilon x)} \epsilon^{d-1}dS = Ct\abs{\phi(0)}. 
\end{align*}
The admissibility conditions \textbf{(BD)} and \textbf{(KN)} are sufficient to apply the Calder\'{o}n-Zygmund theory \cite{BigStein}, which implies that the principal value integral in the second term is a bounded linear operator on $L^p$ for all $1 < p < \infty$. The operator norms, which are the implicit constants in \eqref{ineq:CZ_rescale}, only depend on the bound in \textbf{(BD)} and on the condition
\begin{equation*}
\int_{\abs{x} > 2\abs{y}} \abs{K(x-y) - K(x)} dx \leq B, 
\end{equation*}
which is implied by \textbf{(BD)} \cite{BigStein}.
Both of these conditions are clearly left invariant under the rescaling in \eqref{ineq:CZ_rescale} and this concludes the proof.  
\end{proof}

\vfill\eject
\bibliographystyle{plain}
\bibliography{nonlocal_eqns,dispersive}

\end{document}